\xapptocmd\normalsize{%
	\abovedisplayskip=12pt plus 3pt minus 9pt
	\abovedisplayshortskip=0pt plus 3pt
	\belowdisplayskip=12pt plus 3pt minus 9pt
	\belowdisplayshortskip=7pt plus 3pt minus 4pt
}{}{}
\theoremstyle{definition}
\newtheorem{definition}{Definition}[section]
\theoremstyle{plain}
\newtheorem{theorem}[definition]{Theorem}
\newtheorem{corollary}[definition]{Corollary}
\newtheorem{lemma}[definition]{Lemma}
\numberwithin{equation}{section}
\title {Radii of Starlikeness and Convexity of   Bessel Functions }
\author[V. Madaan]{Vibha Madaan}
\address{Department of Mathematics, University of Delhi, Delhi--110 007, India}
\email{vibhamadaan47@gmail.com}
\author[A. Kumar]{Ajay Kumar}
\address{Department of Mathematics, University of Delhi, Delhi--110 007, India}
\email{akumar@maths.du.ac.in}
\author[V. Ravichandran]{V. Ravichandran}
\address{Department of Mathematics, National Institute of Technology, Tiruchirappalli--620015, India}
\email{vravi68@gmail.com, ravic@nitt.edu}
\keywords{$q$-Bessel function, Lommel function, Lemniscate of Bernoulli, Janowski function, Radius of starlikeness}
\subjclass[2010]{30C10; 30C15; 30C45}
\thanks{The first author is supported by University Grants Commission(UGC), UGC-Ref. No.:1069/(CSIR-UGC NET DEC, 2016).}
\begin{document}
\maketitle
\begin{abstract}
The radii of starlikeness and convexity associated with lemniscate of Bernoulli and the Janowski function, $(1+Az)/(1+Bz)$ for $-1\leq B<A\leq 1$, have been determined for normalizations of $q$-Bessel function, Bessel function of first kind of order $\nu$, Lommel function of first kind and Legendre polynomial of odd degree.
\end{abstract}
	
\section{Introduction}
Let $\mathbb{D}$ be the unit disk  in $\mathbb{C}$  and $\mathcal{A}$ be the class of all analytic functions $f\colon\mathbb{D}\to\mathbb{C}$ normalized  by $f(0)=0$ and $f'(0)=1$. The class $\mathcal{S}$ is the subclass of $\mathcal{A}$ consisting of univalent functions. A function $f\in \mathcal{A}$ is \emph{starlike} if $f(\mathbb{D})$ is starlike with respect to the origin. Various subclasses of starlike function can be unified by making use of the concept of subordination. A function $f\in\mathcal{A}$ is said to be \emph{subordinate} to a function $g\in\mathcal{A}$, written as $f(z)\prec g(z)$, if there is a Schwarz function $w$ with $w(0)=0$ such that $f(z)=g(w(z))$. If $g$ is a univalent function, then $f(z)\prec g(z)$ if and only if $f(0)=g(0)$ and $f(\mathbb{D})\subset g(\mathbb{D})$.  For an analytic function $\varphi$, let $\mathcal{S}^*(\varphi)$ be the class of all analytic functions satisfying  $zf'(z)/f(z)\prec\varphi(z)$.  The class $\mathcal{K}(\varphi)$ is the class of all analytic functions satisfying $1+zf''(z)/f'(z)\prec\varphi(z)$. These classes include respectively several well-known subclasses of starlike and convex  functions. For example, the class $\mathcal{S}^*_\mathcal{L}:=\mathcal{S}^*(\sqrt{1+z})$  denotes the class of  lemniscate starlike functions introduced and studied  by Sok\'{o}l and Stankiewicz \cite{MR1473947} and the class $\mathcal{K}(\sqrt{1+z})$,  denoted by $\mathcal{K}_\mathcal{L}$, is the class of  lemniscate convex functions. For $-1\leq B<A\leq 1$, the class $\mathcal{S}^*[A,B]:=\mathcal{S}^*((1+Az)/(1+Bz))$ is the class of Janowski starlike functions and  the class $\mathcal{K}[A,B]:=\mathcal{K}((1+Az)/(1+Bz))$ is the class of Janowski convex functions (see  \cite{MR0267103}).

Given a class of functions $\mathcal{M}\subset \mathcal{A}$ and a function $f\in\mathcal{A}$, the $\mathcal{M}$-radius of the function $f$ is the largest number $r$ with  $0\leq r \leq 1$ such that $f_r\in\mathcal{M}$, where $f_r(z):=f(rz)/r$.  For $\mathcal{M}=\mathcal{S}^*_\mathcal{L}$, the $\mathcal{M}$-radius of $f$,  denoted by $r^*_\mathcal{L}(f)$, is called  the radius of \emph{lemniscate starlikeness}. It is indeed the largest $r$ with $0\leq r\leq 1$ such that \[ \left|\left(\frac{zf'(z)}{f(z)}\right)^2-1\right| <1\quad \quad (|z|<r). \] For $\mathcal{M}=\mathcal{K}_\mathcal{L}$, the $\mathcal{M}$-radius of $f$,  denoted by $r^c_\mathcal{L}(f)$, is called  the radius of \emph{lemniscate convexity}. It is indeed the largest $r$ with $0\leq r\leq 1$ such that \[ \left|\left(1+\frac{zf''(z)}{f'(z)}\right)^2-1\right|<1 \quad \quad (|z|<r) . \] When $\mathcal{M}=\mathcal{S}^*[A,B]$ or $\mathcal{K}[A,B]$, the respective radii, denoted by $r^*_{A,B}(f)$ and  $r^c_{A,B}(f)$, are called the radii of  \emph{Janowski starlikeness} and \emph{Janowski convexity} of the function $f$. These are respectively the largest $r$ with $0\leq r\leq 1$ satisfying \[  \left|\frac{(zf'(z)/f(z))-1}{A-Bzf'(z)/f(z)}\right|<1 \quad \text{and}\quad  \left|\frac{zf''(z)/f'(z)}{A-B(1+zf''(z)/f'(z))}\right|<1 \quad \quad (|z|<r). \] For more details on the radius problems, one may refer to \cite{MR2879136,MR0704183,MR3620283}.

We intend to look certain normalized Bessel functions for which our aim is to find the radii of lemniscate starlikeness, lemniscate convexity, Janowski starlikeness and Janowski convexity. Consider the Bessel function of first kind of order $\nu$ which is a particular solution of the second order homogeneous Bessel differential equation $z^2w''(z)+zw'(z)+(z^2-\nu^2)w(z)=0$, where $\nu$ is an unrestricted (real or complex) number. The function has an infinite series expansion given by \[ J_\nu(z)=\sum\limits_{n\geq0}\frac{(-1)^n}{n!\Gamma(n+\nu+1)}\left(\frac{z}{2}\right)^{2n+\nu},  \] where $z\in\mathbb{C}$ and $\nu\in\mathbb{C}$ such that $\nu\neq 0,-1,-2,\ldots$. Watson's treatise \cite{MR1349110} made a comprehensive study of Bessel function of first kind. Geometric properties of Bessel function of first kind such as univalence, convexity, starlikeness etc.\@ have been studied in \cite{MR2656410}. Since the Bessel function $J_\nu$ is not normalized, we will consider the following normalizations:
\begin{align}
f_\nu(z)&=(2^\nu\Gamma(\nu+1)J_\nu(z))^{1/\nu}=z-\frac{1}{4\nu(\nu+1)}z^3+\cdots\text{,}\quad{} \nu \neq 0\label{Bessel norm f}\\
g_\nu(z)&=2^\nu\Gamma(\nu+1)z^{1-\nu}J_\nu(z)=z-\frac{1}{4(\nu+1)}z^3+\cdots\label{Bessel norm g}
\intertext{and}
h_\nu(z)&=2^\nu\Gamma(\nu+1)z^{1-\nu/2}J_\nu(\sqrt{z})=z-\frac{1}{4(\nu+1)}z^2+\cdots\text{.}\label{Bessel norm h}
\end{align}
Clearly, the functions $f_\nu$, $g_\nu$ and $h_\nu$ belong to the class $\mathcal{A}$. It can also be  noted that $f_\nu(z)=\exp(\frac{1}{\nu}\log(2^\nu\Gamma(\nu+1)J_\nu(z))$, where $\log$ represents the principle branch of logarithm function. Throughout every multivalued function is taken with the principle branch. Baricz \emph{et al.\@} \cite {MR3182021}  determined the radii of starlikeness of $f_\nu$, $g_\nu$ and $h_\nu$ for $\nu>-1$. Also, Baricz and Sz\'{a}sz \cite{MR3252850} studied radius of convexity of Bessel function of first kind. Bohra \emph{et al.\@} \cite{MR3764714} have obtained the $\mathcal{ST}(k,\alpha)$ radius, that is, the radius of $k$-starlikeness of order $\alpha$  and the $k$-uniformly convex radius, $\mathcal{UCV}(k,\alpha)$ radius, of the normalized Bessel functions of first kind.

Now consider the Jackson and Hahn-Exton $q$-Bessel functions given by
\begin{align*}
J_\nu^{(2)}(z;q)&=\frac{(q^{\nu+1};q)_\infty}{(q;q)_\infty}\sum_{n\geq1}\frac{(-1)^n (\frac{z}{2})^{2n+\nu}}{(q^{\nu+1};q)_n(q;q)_n}q^{n(n+\nu)},\\
J_\nu^{(3)}(z;q)&=\frac{(q^{\nu+1};q)_\infty}{(q;q)_\infty}\sum_{n\geq1}\frac{(-1)^n z^{2n+\nu}}{(q^{\nu+1};q)_n(q;q)_n}q^{n(n+1)/2},
\end{align*}
where $z\in\mathbb{C}$, $v>-1$, $q\in(0,1)$ and $(a;q)_0=1$, $(a;q)_n=\prod_{k=1}^n(1-aq^{k-1})$  and $(a;q)_\infty=\prod_{k\geq1}(1-aq^{k-1})$. The $q$-Bessel functions are analytic $q$-extensions of the classical Bessel function of first kind $J_\nu$. With $\nu>-1$, for a fixed $z$, we have $J_\nu^{(2)}((1-z)q;q)\rightarrow J_\nu(z)$ and $J_\nu^{(3)}((1-z)q;q)\rightarrow J_\nu(2z)$ as $q\nearrow1$. The following normalizations of the $q$-Bessel functions $J_\nu^{(s)}(z)$ are considered for $s=2,3$:
\begin{align}
f_\nu^{(2)}(z;q)&=(2^\nu c_\nu(q)J_\nu^{(2)}(z;q))^{1/\nu}\label{Bessel norm f2},\quad{} \nu\neq 0\\
g_\nu^{(2)}(z;q)&=2^\nu c_\nu(q)z^{1-\nu}J_\nu^{(2)}(z;q)\label{Bessel norm g2},\\
h_\nu^{(2)}(z;q)&=2^\nu c_\nu(q)z^{1-\nu/2}J_\nu^{(2)}(\sqrt{z};q)\label{Bessel norm h2},
\end{align}
where $c_\nu(q)=(q;q)_\infty/(q^{\nu+1};q)_\infty$. Similarly, we have
\begin{align}
f_\nu^{(3)}(z;q)&=(c_\nu(q)J_\nu^{(3)}(z;q))^{1/\nu}\label{Bessel norm f3},\quad{}\nu\neq 0\\
g_\nu^{(3)}(z;q)&=c_\nu(q)z^{1-\nu}J_\nu^{(3)}(z;q)\label{Bessel norm g3},\\
h_\nu^{(3)}(z;q)&=c_\nu(q)z^{1-\nu/2}J_\nu^{(3)}(\sqrt{z};q).\label{Bessel norm h3}
\end{align}
Clearly the functions $f_\nu^{(s)}(\cdot;q)$, $g_\nu^{(s)}(\cdot;q)$ and $h_\nu^{(s)}(\cdot;q)$ belong to the class $\mathcal{A}$ for $s\in\{2,3\}$.  Baricz \emph{et al.\@} \cite{MR3423439} derived the radii of starlikeness and convexity of these $q$-Bessel functions and also observed that for the radius problem, the geometric properties of Jackson $q$-Bessel function and Hahn-Exton $q$-Bessel function are similar. The bound on the radii of starlikeness and convexity of some $q$-Bessel function have been obtained in \cite{MR3684469} and \cite{IO}, respectively.

The Lommel function of first kind $s_{\mu,\nu}(z)$, given by \[ s_{\mu,\nu}(z)=\frac{z^{\mu+1}}{(\mu-\nu+1)(\mu+\nu+1)} {}_1F_2\left(1;\frac{\mu-\nu+3}{2},\frac{\mu+\nu+3}{2};\frac{-z^2}{4}\right),\quad{}z\in\mathbb{C}, \] with ${}_1F_2$ being the hypergeometric function, is a particular solution of the inhomogeneous Bessel differential equation $z^2w''(z)+zw'(z)+(z^2-\nu^2)w(z)=z^{\mu+1}$, where $\mu\pm\nu$ is not a negative odd integer. The geometric properties of Lommel function $s_{\mu-\frac{1}{2},\frac{1}{2}}$ have been observed in \cite{MR3503704}. Baricz \emph{et al.\@} \cite{MR3596935} studied the properties of zeros of Lommel function $s_{\mu-\frac{1}{2},\frac{1}{2}}$ and Struve function $\textbf{H}_\nu$ and their derivatives and obtained the radii of convexity of certain normalizations of these functions. For $\nu=1/2$, following analytic normalizations of the Lommel function of first kind have been considered to determine the radii of starlikeness and convexity of the respective functions associated with lemniscate of Bernoulli and the Janowski function $(1+Az)/(1+Bz)$: 	
\begin{align}
f_\mu(z)&=f_{\mu-\frac{1}{2},\frac{1}{2}}(z)=\left(\mu(\mu+1)s_{\mu-\frac{1}{2},\frac{1}{2}}(z)\right)^{\frac{1}{\mu+\frac{1}{2}}}, \quad (\mu\neq -1/2)                                   \label{Lommel norm f}\\
g_\mu(z)&=g_{\mu-\frac{1}{2},\frac{1}{2}}(z)=\mu(\mu+1)z^{-\mu+\frac{1}{2}}s_{\mu-\frac{1}{2},\frac{1}{2}}(z)                                                        \label{Lommel norm g}
\intertext{and}
h_\mu(z)&=h_{\mu-\frac{1}{2},\frac{1}{2}}(z)=\mu(\mu+1)z^{\frac{3-2\mu}{4}}s_{\mu-\frac{1}{2},\frac{1}{2}}(\sqrt{z}).                                                   \label{Lommel norm h}
\end{align}
In \cite{MKR2}, the conditions on the respective parameters are derived so that generalized normalized Bessel function and generalized normalized Lommel function are lemniscate convex and lemniscate Carath\'{e}odory in $\mathbb{D}$.

The Legendre polynomials are the solutions of the Legendre differential equation  $(1-z^2)w''(z)-2zw'(z)+n(n+1)w(z)=0$, where $n$ is a non-negative integer. The Legendre polynomial $P_n$ is symmetric or antisymmetric, that is $P_n$ satisfies $P_n(-z)=(-1)^n P_n(z)$. Let the Legendre polynomial $P_{2n-1}$ is normalized by \[ \mathcal{P}_{2n-1}(z)=\frac{P_{2n-1}(z)}{P'_{2n-1}(0)}=z+a_2z^2+\cdots+a_{2n-1}z^{2n-1}. \]
In \cite{MR3905364}, the authors have determined the radius of starlikeness, convexity and uniform convexity of the Legendre polynomial of odd degree.

Since the $q$-Bessel function, Bessel function of first kind, Lommel function of first kind and the Legendre polynomial are all entire functions, hence whenever the $\mathcal{M}$-radius of the respective normalized function exceeds $1$, then function is said to possess the property of class $\mathcal{M}$ in disk $\mathbb{D}_r:=\{z: |z|<r\}$.

In the following section, using some Mittag-Leffler expansions for quotients of the normalized Bessel functions, the radii of lemniscate starlikeness and convexity have been determined for the normalized $q$-Bessel functions, normalized Bessel functions of first kind, the normalized Lommel functions and normalized Legendre polynomial of odd degree. In section \ref{janowski}, the Janowski starlike and Janowski convex radii have been obtained for these normalized functions. The fact that zeros of the normalized Bessel functions are real and interlaced with the zeros of their derivatives is used as a key tool in the proofs. The same holds for the zeros of normalized Lommel function of first kind and its derivative.

\section{Lemniscate Starlikeness and Lemniscate Convexity}\label{lemniscate}
This section deals with the problem to find the radii of lemniscate starlikeness and lemniscate convexity of the normalizations of the $q$-Bessel function, Bessel function of first kind, Lommel function of first kind and the Legendre polynomial of odd degree. According to \cite[Lemma 1]{MR3423439}, for $\nu>-1$, the Hadamard factorizations of the functions $J^{(2)}_\nu(z;q)$ and $J^{(3)}_\nu(z;q)$ for $z\in\mathbb{C}$ are given by
\begin{equation}\label{jqv}
J_\nu^{(2)}(z;q)=\frac{z^\nu}{2^\nu c_\nu(q)} \prod_{n\geq1} \left(1-\frac{z^2}{\xi^2_{\nu,n}(q)}\right)  \text{ and }  J_\nu^{(3)}(z;q) =\frac{z^\nu}{c_\nu(q)} \prod_{n\geq1} \left(1-\frac{z^2}{\zeta^2_{\nu,n}(q)}\right),
\end{equation}
where $\xi_{\nu,n}(q)$ and $\zeta_{\nu,n}(q)$ are $n^{\text{th}}$ positive zeros of the function $J_\nu^{(2)}(\cdot;q)$ and $J_\nu^{(3)}(\cdot;q)$, respectively satisfying the inequality $\xi_{\nu,1}(q)<\xi_{\nu,2}(q)<\cdots$ and $\zeta_{\nu,1}(q)<\zeta_{\nu,2}(q)<\cdots$.

The following result gives the lemniscate starlike radii of the functions $f_\nu^{(s)}$, $g_\nu^{(s)}$ and $h_\nu^{(s)}$  for $s\in\{2,3\}$. 	
\begin{theorem}\label{lem starlike q Bessel}
	For $0<q<1$, let $\xi_{\nu,1}(q)$ and $\zeta_{\nu,1}(q)$ denote the first positive zero of the $q$-Bessel functions $J^{(2)}_\nu(\cdot;q)$ and $J^{(3)}_\nu(\cdot;q)$, respectively. For $s\in\{2,3\}$, if $\nu>0$, the function $f_\nu^{(s)}(\cdot;q)$ and for $\nu>-1$, the functions $g_\nu^{(s)}(\cdot;q)$ and $h^{(s)}_\nu(\cdot;q)$ have their lemniscate starlike radii; $r^*_\mathcal{L}(f^{(s)}_\nu)$, $r^*_\mathcal{L}(g^{(s)}_\nu)$ and $r^*_\mathcal{L}(h^{(s)}_\nu)$, respectively as the smallest positive root of the equations (respectively)
	\begin{align}
	&r^2\left(\frac{d}{dr}J_\nu^{(s)}(r;q)\right)^2-4r\nu J_\nu^{(s)}(r;q) \frac{d}{dr}J_\nu^{(s)}(r;q) +2\nu^2(J_\nu^{(s)}(r;q))^2=0,\label{lem star Bessel eqn fs}\\
	\begin{split}
	&\left(r\frac{d}{dr}J_\nu^{(s)}(r;q)-\nu J_\nu^{(s)}(r;q)\right)^2\\
	&\quad{}\quad{}-2\left(r \frac{d}{dr}J_\nu^{(s)}(r;q)-\nu J_\nu^{(s)}(r;q)\right)J_\nu^{(s)}(r;q)-(J_\nu^{(s)}(r;q))^2=0\label{lem star Bessel eqn gs}
	\end{split}
	\end{align}
	and
	\begin{equation}\label{lem star Bessel eqn hs}
	\begin{split}
	&\left(\sqrt{r}\frac{d}{dr}J_\nu^{(s)}(\sqrt{r};q)-\nu J_\nu^{(s)}(\sqrt{r};q)\right)^2\\
	&\quad{}\quad{}-4\left(\sqrt{r} \frac{d}{dr}J_\nu^{(s)}(\sqrt{r};q)-\nu J_\nu^{(s)}(\sqrt{r};q)\right) J_\nu^{(s)}(\sqrt{r};q)-4(J_\nu^{(s)}(\sqrt{r};q))^2=0.
	\end{split}
	\end{equation}
	For $s=2,3$, $r^*_\mathcal{L}(f_\nu^{(s)})$, $r^*_\mathcal{L}(g_\nu^{(s)})$ and $r^*_\mathcal{L}(h_\nu^{(s)})$ are the unique roots of the equations \eqref{lem star Bessel eqn fs}, \eqref{lem star Bessel eqn gs} and \eqref{lem star Bessel eqn hs}, respectively in $(0,t)$, where $t=\xi_{\nu,1}(q)$ for $s=2$ and $t=\zeta_{\nu,1}(q)$ for $s=3$.
\end{theorem}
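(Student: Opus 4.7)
The plan is to extract Mittag-Leffler expansions of the logarithmic derivatives $p(z):=zf'(z)/f(z)$ of the three normalisations from the Hadamard factorization \eqref{jqv}, and then to bound $|p(z)^2-1|$ on circles $|z|=r$ by comparing with its real value at $z=r$. The equations in the theorem will emerge as the algebraic restatement of the critical threshold $p(r)=2-\sqrt{2}$.

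Writing $t_n:=\xi_{\nu,n}(q)$ when $s=2$ and $t_n:=\zeta_{\nu,n}(q)$ when $s=3$, logarithmic differentiation of \eqref{jqv} gives $J_\nu^{(s)\prime}(z;q)/J_\nu^{(s)}(z;q)=\nu/z-\sum_{n\geq 1}2z/(t_n^2-z^2)$. Combining this with the prefactors in \eqref{Bessel norm f2}--\eqref{Bessel norm h3} produces
\[\frac{zf_\nu^{(s)\prime}(z;q)}{f_\nu^{(s)}(z;q)}=1-\frac{2}{\nu}\sum_{n\geq 1}\frac{z^2}{t_n^2-z^2},\qquad\frac{zg_\nu^{(s)\prime}(z;q)}{g_\nu^{(s)}(z;q)}=1-2\sum_{n\geq 1}\frac{z^2}{t_n^2-z^2},\]
and $zh_\nu^{(s)\prime}(z;q)/h_\nu^{(s)}(z;q)=1-\sum_{n\geq 1}z/(t_n^2-z)$, the last obtained from the Hadamard product of $J_\nu^{(s)}(\sqrt{z};q)$ whose zeros in $z$ sit at $t_n^2$. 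The elementary inequalities $|t_n^2-z^2|\geq t_n^2-r^2$ and $|t_n^2-z|\geq t_n^2-r$ on $|z|=r$ now yield the uniform bound $|p(z)-1|\leq 1-p(r)$ in every case.

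Writing $p(z)^2-1=(p(z)-1)\bigl(2+(p(z)-1)\bigr)$ and applying this bound gives the key estimate $|p(z)^2-1|\leq (1-p(r))(3-p(r))$, which is strictly less than $1$ exactly when $p(r)$ exceeds the smaller root of $x^2-4x+2=0$, namely $2-\sqrt{2}$; hence $p(r)>2-\sqrt{2}$ is sufficient for $f_r\in\mathcal{S}^*_\mathcal{L}$. To convert the threshold $p(r)=2-\sqrt{2}$ into the stated equations, I would reinsert the Hadamard identity $rJ_\nu^{(s)\prime}(r;q)/J_\nu^{(s)}(r;q)=\nu-2\sum r^2/(t_n^2-r^2)$ and clear denominators: for $f_\nu^{(s)}$ and $g_\nu^{(s)}$ this directly produces the quadratics \eqref{lem star Bessel eqn fs} and \eqref{lem star Bessel eqn gs}, whose smallest positive roots correspond to $p(r)=2-\sqrt{2}$; for $h_\nu^{(s)}$ the same calculation after the substitution $w=\sqrt{r}$ delivers \eqref{lem star Bessel eqn hs}.

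Uniqueness of the root in $(0,t)$ is immediate from strict monotonicity: every summand in the Mittag-Leffler series for $p(r)$ is positive and strictly increasing in $r$, so $p(r)$ is real-analytic and decreases continuously from $p(0)=1$ to $-\infty$, hitting the level $2-\sqrt{2}$ exactly once. The principal technical obstacle I anticipate is the algebraic bookkeeping in the $h_\nu^{(s)}$ case, where the chain-rule factor coming from differentiating at $\sqrt{r}$ must be tracked so that the quadratic comes out precisely as written; the remainder of the argument rests on two standard ingredients—Hadamard's real-zero factorisation from \cite{MR3423439} and the triangle inequality—that are by now routine in this circle of radius problems and already used in \cite{MR3182021} for the classical Bessel case.
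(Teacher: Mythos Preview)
Your proposal is correct and follows essentially the same route as the paper: Hadamard factorisation $\Rightarrow$ Mittag--Leffler expansions for $p(z)=zf'(z)/f(z)$, the triangle-inequality bound $|p(z)^2-1|\leq(1-p(r))(3-p(r))=p(r)^2-4p(r)+3$ on $|z|=r$, and then monotonicity of $p(r)$ together with the intermediate value theorem to pin down the unique root of $p(r)^2-4p(r)+2=0$ in $(0,t)$. Your explicit identification of the threshold $p(r)=2-\sqrt{2}$ and the factored form $p^2-1=(p-1)(2+(p-1))$ are cosmetic variations on the paper's computation, and your algebraic reduction to \eqref{lem star Bessel eqn fs}--\eqref{lem star Bessel eqn hs} is exactly what the paper does.
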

\begin{proof}
	The proofs for the cases $s=2$ and $s=3$ are almost the same except for the difference of the zeros of the corresponding functions. The proof presented is for $s=2$. The case $s=3$ follows similarly. For simplicity, the notations $J_\nu(z;q):=J_\nu^{(2)}(z;q)$, $f_\nu(z;q):=f_\nu^{(2)}(z;q)$, $g_\nu(z;q):=g_\nu^{(2)}(z;q)$ and $h_\nu(z;q):=h_\nu^{(2)}(z;q)$ are used and the derivatives are with respect to the first component. The factorization \eqref{jqv} of the function $J_\nu(\cdot;q)$ implies
	\begin{equation}\label{Bessel expr jq}	\frac{zJ'_\nu(z;q)}{J_\nu(z;q)}=\nu-\sum_{n\geq1}\frac{2z^2}{\xi_{\nu,n}^2(q)-z^2}.
	\end{equation}
	Keeping in view the normalizations \eqref{Bessel norm f2}, \eqref{Bessel norm g2}, \eqref{Bessel norm h2}, it follows from the equation \eqref{Bessel expr jq} that
	\begin{align}
	\frac{zf'_\nu(z;q)}{f_\nu(z;q)}&=\frac{1}{\nu}\frac{zJ'_\nu(z;q)}{J_\nu(z;q)}=1-\frac{1}{\nu}\sum_{n\geq1}\frac{2z^2}{\xi_{\nu,n}^2(q)-z^2},\label{Bessel expr f2}\\
	\frac{zg'_\nu(z;q)}{g_\nu(z;q)}&=1-\nu+\frac{zJ'_\nu(z;q)}{J_\nu(z;q)}=1-\sum_{n\geq1}\frac{2z^2}{\xi_{\nu,n}^2(q)-z^2},\label{Bessel expr g2}\\
	\frac{zh'_\nu(z;q)}{h_\nu(z;q)}&=1-\frac{\nu}{2}+\frac{1}{2}\frac{\sqrt{z}J'_\nu(\sqrt{z};q)}{J_\nu(\sqrt{z};q)}=1-\sum_{n\geq1}\frac{z}{\xi_{\nu,n}^2(q)-z}.\label{Bessel expr h2}
	\end{align}
	By virtue of \eqref{Bessel expr f2}, for $|z|<\xi_{\nu,1}(q)$, by simple computations, the inequality
	\begin{equation*}\label{Bessel expr mod f}
	\begin{split}
	\left|\left(\frac{zf'_\nu(z;q)}{f_\nu(z;q)}\right)^2-1\right|&\leq \frac{1}{\nu^2}\left(\sum_{n\geq1}\frac{2|z|^2}{\xi_{\nu,n}^2(q)-|z|^2}\right)\left(\sum_{n\geq1}\frac{2|z|^2}{\xi_{\nu,n}^2(q)-|z|^2}+2\nu\right)\\
	&=\left(\frac{|z|f'_\nu(|z|;q)}{f_\nu(|z|;q)}\right)^2-4\left(\frac{|z|f'_\nu(|z|;q)}{f_\nu(|z|;q)}\right)+3
	\end{split}
	\end{equation*} holds for $\nu>0$ and $n\in\mathbb{N}$, where the last equality holds by replacing $z$ by $|z|$ in equation \eqref{Bessel expr f2}. Similar inequalities hold for the functions $g_\nu(\cdot;q)$ and $h_\nu(\cdot;q)$ for $\nu>-1$.
	
	If the function $p(\cdot;q)$ collectively represents the functions $f_\nu(\cdot;q)$, $g_\nu(\cdot;q)$ and $h_\nu(\cdot;q)$, then by the above relation, with $r^*$ being the smallest positive root of the equation \[	\left(\frac{rp'(r;q)}{p(r;q)}\right)^2-4\left(\frac{rp'(r;q)}{p(r;q)}\right)+2=0,\] the inequality $|(rp'(r;q)/p(r;q))^2-1|<1$ is true for $|z|<r^*$. From equations \eqref{Bessel expr f2}, \eqref{Bessel expr g2} and \eqref{Bessel expr h2}, note that the zeros of the above equation for the functions $f_\nu(\cdot;q)$, $g_\nu(\cdot;q)$ and $h_\nu(\cdot;q)$ coincide with those of equations \eqref{lem star Bessel eqn fs}, \eqref{lem star Bessel eqn gs} and \eqref{lem star Bessel eqn hs}, respectively with $s=2$. This implies that the lemniscate starlike radii of these functions, $r^*_\mathcal{L}(f_\nu)$, $r^*_\mathcal{L}(g_\nu)$ and $r^*_\mathcal{L}(h_\nu)$, are smallest positive roots of the mentioned equations with $s=2$.
	
	In particular for the function $f_\nu(\cdot;q)$, the function $u_\nu(\cdot;q) \colon (0,\xi_{\nu,1}(q)) \to \mathbb{R}$, defined by $ u_\nu(r;q):=(rf'_\nu(r;q)/f_\nu(r;q))^2-4 rf'_\nu(r;q)/f_\nu(r;q) +2$, is continuous and is strictly increasing function of $r$ for $\nu>0$, as \[ u'_\nu(r;q) =\frac{2}{\nu} \sum_{n\geq1} \frac{4r\xi_{\nu,n}^2(q)}{(\xi^2_{\nu,n}(q)-r^2)^2} \left(1+\frac{1}{\nu}\sum_{n\geq1}\frac{2r^2}{\xi^2_{\nu,n}(q)-r^2}\right)>0.\] Also, $\lim_{r\searrow0}u_\nu(r;q)=-1<0$ and $\lim_{r\nearrow \xi_{\nu,1}(q)}u_\nu(r;q)=\infty>0$. Thus, the Intermediate Value Theorem ensures the existence of the unique zero of $u_\nu(\cdot;q)$ in $(0,\xi_{\nu,1}(q))$ and hence, the lemniscate starlike radius of the function $f_\nu(z;q)$; $r^*_\mathcal{L}(f_\nu)$, is the unique zero of $u_\nu(r;q)$ in $(0,\xi_{\nu,1}(q))$ or of the equation \eqref{lem star Bessel eqn fs} for $s=2$. In the similar manner, the existence of root of the other equations can be verified and this is how we can determine radii of lemniscate starlikeness for the functions $f_\nu^{(s)}$, $g_\nu^{(s)}$ and $h_\nu^{(s)}$ for $s=2,3$. 	
\end{proof}

%>>>>>>>>>>>>>>>>>>>>>>>>>>>>>>>>>>>>>>>>>>>>>>>>>>>>>>>>>>>>>>>>>>>>>>>>>>>>>>>>>>>>>>>>>>>>>>>>>>
For $s=2,3$ and $\nu>0$, the Hadamard's factorizations of $dJ^{(s)}_\nu(z;q)/dz$, \cite[Lemma 7]{MR3423439}, are given by
\begin{equation}\label{Bessel djq}
\frac{d}{dz}J^{(2)}_\nu(z;q)=\frac{\nu z^{\nu-1}}{2^\nu c_\nu(q)}\prod_{n\geq1}\left(1-\frac{z^2}{\xi'^2_{\nu,n}(q)}\right) \quad \text{and} \quad  \frac{d}{dz}J^{(3)}_\nu(z;q)=\frac{\nu z^{\nu-1}}{c_\nu(q)}\prod_{n\geq1}\left(1-\frac{z^2}{\zeta'^2_{\nu,n}(q)}\right),
\end{equation}
where $\xi'_{\nu,n}(q)$ and $\zeta'_{\nu,n}(q)$ are the $n^\text{th}$ positive zeros of the functions $dJ^{(2)}_\nu(z;q)/dz$ and $dJ^{(3)}_\nu(z;q)/dz$, respectively. Using \cite[Theorem 4.3]{MR0649849} and \cite[Lemma 9]{MR3423439}, the zeros of $dJ^{(2)}_\nu(z;q)/dz$ and $J^{(2)}_\nu(z;q)$ are interlaced and the relation $0<\xi'_{\nu,1}(q)<\xi_{\nu,1}(q)<\xi'_{\nu,2}(q)<\xi_{\nu,2}(q)<\cdots$ holds. By \cite[Theorem 3.7]{MR1293849}, a similar relation holds between the zeros of $dJ_\nu^{(3)}(z;q)/dz$ and $J^{(3)}_\nu(z;q)$.

The next result presents the radius for lemniscate convexity of the normalized $q$-Bessel functions $f^{(s)}_\nu$, $g^{(s)}_\nu$ and $h^{(s)}_\nu$ for $s=2,3$.
\begin{theorem}\label{lem convex q Bessel}
	For the $q$-Bessel function $J^{(2)}_\nu(\cdot;q)$ and $J^{(3)}_\nu(\cdot;q)$ with $0<q<1$, let $\xi'_{\nu,1}(q)$ and $\zeta'_{\nu,1}(q)$ denote the first positive zeros of $dJ^{(s)}_\nu(z;q)/dz$ for $s=2,3$, respectively and $\alpha_{\nu,1}(q)$ and $\gamma_{\nu,1}(q)$ be the first positive zeros of $z\cdot dJ^{(s)}_\nu(z;q)/dz+(1-\nu)J^{(s)}_\nu(z;q)$ for $s=2$ and $s=3$, respectively. Similarly, let $\beta_{\nu,1}(q)$ and $\delta_{\nu,1}(q)$ denote the first positive zeros of $z\cdot dJ^{(s)}_\nu(z;q)/dz+(2-\nu)J^{(s)}_\nu(z;q)$ with $s=2$ and $s=3$, respectively. For $s\in\{2,3\}$, the lemniscate convex radii for the function $f^{(s)}_\nu(\cdot;q)$ with $\nu>0$ and for the functions $g^{(s)}_\nu(\cdot;q)$ and $h^{(s)}_\nu(\cdot;q)$ with $\nu>-1$, $r^c_\mathcal{L}(f^{(s)}_\nu)$, $r^c_\mathcal{L}(g^{(s)}_\nu)$ and $r^c_\mathcal{L}(h^{(s)}_\nu)$ are respectively the smallest positive root of the equations
	\begin{equation}\label{lem convex Bessel eqn fs}
	\begin{split}
	&\left(\dfrac{rd^2J^{(s)}_\nu(r;q)/dr^2}{dJ^{(s)}_\nu(r;q)/dr}+\left(\dfrac{1}{\nu}-1\right)\dfrac{r dJ^{(s)}_\nu(r;q)/dr}{J^{(s)}_\nu(r;q)}\right)^2\\
	&\quad{}\quad{}\quad{}-2\left(\dfrac{rd^2J^{(s)}_\nu(r;q)/dr^2}{dJ^{(s)}_\nu(r;q)/dr}+\left(\dfrac{1}{\nu}-1\right)\dfrac{r dJ^{(s)}_\nu(r;q)/dr}{J^{(s)}_\nu(r;q)}\right)-1=0,
	\end{split}
	\end{equation}
	\begin{equation}\label{lem convex Bessel eqn gs}
	\begin{split}
	&\left(\frac{r^2d^2J^{(s)}_\nu(r;q)/dr^2+2(1-\nu)rdJ^{(s)}_\nu(r;q)/dr+\nu(\nu-1)J^{(s)}_\nu(r;q)}{r dJ^{(s)}_\nu(r;q)/dr+(1-\nu)J^{(s)}_\nu(r;q)}\right)^2\\
	&\quad{}\quad{}-2\left(\frac{r^2d^2J^{(s)}_\nu(r;q)/dr^2+2(1-\nu)rdJ^{(s)}_\nu(r;q)/dr+\nu(\nu-1)J^{(s)}_\nu(r;q)}{r dJ^{(s)}_\nu(r;q)/dr+(1-\nu)J^{(s)}_\nu(r;q)}\right)-1=0
	\end{split}
	\end{equation}
	and
	\begin{equation}\label{lem convex Bessel eqn hs}
	\begin{split}
	&\left(\frac{rd^2J^{(s)}_\nu(\sqrt{r};q)/dr^2+(3-2\nu)\sqrt{r}dJ^{(s)}_\nu(\sqrt{r};q)/dr+\nu(\nu-2)J^{(s)}_\nu(\sqrt{r};q)}{2(\sqrt{r}dJ^{(s)}_\nu(\sqrt{r};q)/dr+(2-\nu)J^{(s)}_\nu(\sqrt{r};q))}\right)^2\\
	&\quad{}-2\left(\frac{rd^2J^{(s)}_\nu(\sqrt{r};q)/dr^2+(3-2\nu)\sqrt{r}dJ^{(s)}_\nu(\sqrt{r};q)/dr+\nu(\nu-2)J^{(s)}_\nu(\sqrt{r};q)}{2(\sqrt{r}dJ^{(s)}_\nu(\sqrt{r};q)/dr+(2-\nu)J^{(s)}_\nu(\sqrt{r};q))}\right)-1=0.
	\end{split}
	\end{equation}
	In addition to that, with $s\in\{2,3\}$,  the radii $r^c_\mathcal{L}(f^{(s)}_\nu)$, $r^c_\mathcal{L}(g^{(s)}_\nu)$ and $r^c_\mathcal{L}(h^{(s)}_\nu)$ are unique positive roots of the equations \eqref{lem convex Bessel eqn fs}, \eqref{lem convex Bessel eqn gs} and \eqref{lem convex Bessel eqn hs}, respectively in $(0,t_1)$, $(0,t_2)$ and $(0,t_3)$ such that $t_1=\xi'_{\nu,1}(q)$, $t_2=\alpha_{\nu,1}(q)$ and $t_3=\beta_{\nu,1}(q)$ for $s=2$ and $t_1=\zeta'_{\nu,1}(q)$, $t_2=\gamma_{\nu,1}(q)$ and $t_3=\delta_{\nu,1}(q)$ for $s=3$.
\end{theorem}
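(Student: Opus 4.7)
The approach parallels the proof of Theorem~\ref{lem starlike q Bessel} via the identity $1 + zp''(z;q)/p'(z;q) = z(zp'(z;q))'/(zp'(z;q))$, which converts the convexity requirement on $p \in \{f_\nu^{(s)}, g_\nu^{(s)}, h_\nu^{(s)}\}$ into a starlikeness-type requirement on $zp'(\cdot;q)$. The plan is to derive a Mittag--Leffler expansion of $1 + zp''(z;q)/p'(z;q)$ in the form $1 - V(z)$, where $V$ is a tail sum of terms of the type $2z^2/(\rho^2 - z^2)$ (or $z/(\rho^2 - z)$ for $h_\nu^{(s)}$) with positive real poles $\rho$, and then to mimic the triangle-inequality and intermediate-value argument from the proof of Theorem~\ref{lem starlike q Bessel}.

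For $f_\nu^{(s)}$, logarithmic differentiation of \eqref{Bessel norm f2}--\eqref{Bessel norm f3} yields
\[ 1 + \frac{zf_\nu^{(s)}{}''(z;q)}{f_\nu^{(s)}{}'(z;q)} = 1 + \frac{z\,d^2 J_\nu^{(s)}(z;q)/dz^2}{dJ_\nu^{(s)}(z;q)/dz} + \left(\frac{1}{\nu} - 1\right)\frac{z\,dJ_\nu^{(s)}(z;q)/dz}{J_\nu^{(s)}(z;q)}, \]
and the Hadamard factorisations \eqref{jqv} and \eqref{Bessel djq} supply the Mittag--Leffler expansions of both logarithmic derivatives, whose constant contributions $\nu - 1$ and $\nu$ combine to cancel, leaving the desired form $1 - V_f(z)$ with two tail sums over $\xi'_{\nu,n}(q), \xi_{\nu,n}(q)$ (respectively $\zeta'_{\nu,n}(q), \zeta_{\nu,n}(q)$). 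For $g_\nu^{(s)}$ and $h_\nu^{(s)}$, a direct calculation from \eqref{Bessel norm g2}--\eqref{Bessel norm h3} shows that $zg_\nu^{(s)}{}'(z;q)$ and $zh_\nu^{(s)}{}'(z;q)$ are, up to a power-of-$z$ prefactor, the entire functions $z\,dJ_\nu^{(s)}(z;q)/dz + (1-\nu)J_\nu^{(s)}(z;q)$ and (with $u = \sqrt{z}$) $u\,dJ_\nu^{(s)}(u;q)/du + (2-\nu)J_\nu^{(s)}(u;q)$, respectively. One must then establish Hadamard factorisations of these auxiliary entire functions---whose first positive zeros are $\alpha_{\nu,1}(q)$ and $\beta_{\nu,1}(q)$ (or $\gamma_{\nu,1}(q)$ and $\delta_{\nu,1}(q)$ for $s=3$)---in the form $\text{const}\cdot z^\nu \prod_{n\geq 1}(1 - z^2/\alpha_{\nu,n}^2(q))$ and analogously for the second, from which logarithmic differentiation produces
\[ 1 + \frac{zg_\nu^{(s)}{}''(z;q)}{g_\nu^{(s)}{}'(z;q)} = 1 - \sum_{n\geq 1}\frac{2z^2}{\alpha_{\nu,n}^2(q) - z^2}, \quad 1 + \frac{zh_\nu^{(s)}{}''(z;q)}{h_\nu^{(s)}{}'(z;q)} = 1 - \sum_{n\geq 1}\frac{z}{\beta_{\nu,n}^2(q) - z}. \]

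Writing $1 + zp''/p' = 1 - V(z)$ in each case, the triangle inequality gives $|(1-V(z))^2 - 1| = |V(z)||V(z)-2| \leq V(|z|)(V(|z|)+2)$ for $|z|$ inside the first positive pole of $V$, since every summand $c\,z^a/(\rho^2 - z^a)$ is maximised in modulus at $|z|$. Setting the right-hand side equal to $1$ yields the quadratic $V(r)^2 + 2V(r) - 1 = 0$ at $r = |z|$, and substituting the real-axis identity $V(r) = -r p''(r;q)/p'(r;q)$ together with the explicit expressions for $p''/p'$ in terms of $J_\nu^{(s)}$ and its derivatives produces exactly the equations \eqref{lem convex Bessel eqn fs}--\eqref{lem convex Bessel eqn hs}. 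Existence and uniqueness of the root in $(0, t_i)$ then follow as in Theorem~\ref{lem starlike q Bessel}: each term of $V$ is continuous and strictly increasing in $r$, $V(0^+) = 0$, and $V(r) \to +\infty$ as $r$ approaches the first positive pole, so the continuous strictly increasing function $V(r)^2 + 2V(r) - 1$ takes the value $-1$ at $r = 0$ and tends to $+\infty$, giving a unique zero by the Intermediate Value Theorem.

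The principal technical obstacle is establishing the Hadamard factorisations of the two auxiliary entire functions $z\,dJ_\nu^{(s)}(z;q)/dz + (k-\nu) J_\nu^{(s)}(z;q)$ for $k = 1, 2$ and $s = 2, 3$---that is, verifying that they admit only real, positive, simple zeros, interlaced appropriately with those of $J_\nu^{(s)}$, analogously to the relations $0 < \xi'_{\nu,1}(q) < \xi_{\nu,1}(q) < \xi'_{\nu,2}(q) < \cdots$ cited after Theorem~\ref{lem starlike q Bessel}; this typically rests on a Laguerre--P\'olya argument or a $q$-analogue of the Sturm--Hurwitz theorem for $q$-Bessel functions. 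A secondary subtlety arises in the $f_\nu^{(s)}$ case, where the coefficient $1/\nu - 1$ changes sign at $\nu = 1$, so care is required in the triangle-inequality step to ensure that the resulting real-variable bound on $|V_f(z)|$ matches $V_f(|z|)$ and yields the stated quadratic equation uniformly for $\nu > 0$.
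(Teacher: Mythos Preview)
Your proposal is correct and follows essentially the same route as the paper: the two obstacles you flag are resolved there by citing \cite[Lemma~8]{MR3423439} for the Hadamard factorisations of $g'_\nu(\cdot;q)$ and $h'_\nu(\cdot;q)$, and the inequality \cite[Lemma~2.1]{MR3641791} (recorded as \eqref{relation}) to handle the sign of $1/\nu-1$ in the $f_\nu$ triangle-inequality step when $\nu>1$. One caution: your uniqueness claim that ``each term of $V$ is strictly increasing'' does not hold term-by-term for $V_f$ when $\nu>1$, since the $(1/\nu-1)$-weighted sum then contributes negatively; the paper instead secures monotonicity of $u_\nu$ via the interlacing $\xi'_{\nu,n}(q)<\xi_{\nu,n}(q)$ and the resulting inequality $\xi^2_{\nu,n}(q)(\xi'^2_{\nu,n}(q)-r^2)^2<\xi'^2_{\nu,n}(q)(\xi^2_{\nu,n}(q)-r^2)^2$.
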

\begin{proof}
	The proof for the case $s=2$ is given and the other one follows in the similar manner. Using equation \eqref{Bessel djq}, we obtain
	\begin{equation}\label{Bessel expr djq}
	1+\frac{zJ''_\nu(z;q)}{J'_\nu(z;q)}=\nu-\sum_{n\geq 1}\frac{2z^2}{\xi'^2_{\nu,n}(q)-z^2}.
	\end{equation}
	By means of normalization \eqref{Bessel norm f2} and equations \eqref{Bessel expr jq} and \eqref{Bessel expr djq}, it follows that
	\begin{equation}\label{Bessel expr df2}
	\begin{split}
	&1+\frac{zf''_\nu(z;q)}{f'_\nu(z;q)}=1+\frac{zJ''_\nu(z;q)}{J'_\nu(z;q)}+\left(\frac{1}{\nu}-1\right)\frac{zJ'_\nu(z;q)}{J_\nu(z;q)}\\
	&\quad{}=1-\sum_{n\geq1}\frac{2z^2}{\xi'^2_{\nu,n}(q)-z^2}-\left(\frac{1}{\nu}-1\right)\sum_{n\geq1}\frac{2z^2}{\xi^2_{\nu,n}(q)-z^2}.
	\end{split}
	\end{equation}
	Now, suppose that $\nu\in(0,1]$, then $\lambda=1/\nu-1\geq 0$. Using equation \eqref{Bessel expr df2} and triangle's inequality for $|z|<\xi'_{\nu,1}(q)<\xi_{\nu,1}(q)$, we get
	\begin{align*}
	\left|\left(1+\frac{zf''_\nu(z;q)}{f'_\nu(z;q)}\right)^2-1\right|&\leq \left(\sum_{n\geq1}\frac{2|z|^2}{\xi'^2_{\nu,n}(q)-|z|^2}+\lambda\sum_{n\geq1}\frac{2|z|^2}{\xi^2_{\nu,n}(q)-|z|^2}\right)^2\\
	&\quad{}+2\left(\sum_{n\geq1}\frac{2|z|^2}{\xi'^2_{\nu,n}(q)-|z|^2}+\lambda\sum_{n\geq1}\frac{2|z|^2}{\xi^2_{\nu,n}(q)-|z|^2}\right)
	\end{align*}
	holds. From equation \eqref{Bessel expr df2}, if $z$ is replaced by $|z|$, then the above inequality yields
	\begin{equation}\label{lem convex Bessel eqn f2}
	\left|\left(1+\frac{zf''_\nu(z;q)}{f'_\nu(z;q)}\right)^2-1\right| \leq \left(\frac{|z|f''_\nu(|z|;q)}{f'_\nu(|z|;q)}\right)^2-2\left(\frac{|z|f''_\nu(|z|;q)}{f'_\nu(|z|;q)}\right).
	\end{equation}
	Using the relation
	\begin{equation}\label{relation} \left|\frac{z}{a-z}-\lambda\frac{z}{b-z}\right|\leq\frac{|z|}{a-|z|}-\lambda\frac{|z|}{b-|z|} \end{equation}
	for $|z|\leq r<a<b$ and $0\leq \lambda<1$, from \cite[Lemma 2.1]{MR3641791}, we notice that the inequality \eqref{lem convex Bessel eqn f2} holds for the case when $\nu>1$ as well. Thus, for $\nu>0$ and $|z|<\xi'_{\nu,1}(q)$, the relation \eqref{lem convex Bessel eqn f2} holds. Therefore, the function $f_\nu(z;q)$ is lemniscate convex for $|z|<r_1$, where $r_1$ is the smallest positive root of
	\begin{equation}\label{lem convex Bessel eqn df2} 	
	\left(\frac{rf''_\nu(r;q)}{f'_\nu(r;q)}\right)^2-2\left(\frac{rf''_\nu(r;q)}{f'_\nu(r;q)}\right)-1=0.
	\end{equation}
	
	Further, we observe that the function $u_\nu(\cdot;q)\colon (0,\xi'_{\nu,1}(q))\to\mathbb{R}$, defined by $u_\nu(r;q):=(rf''_\nu(r;q)/f'_\nu(r;q))^2-2(rf''_\nu(r;q)/f'_\nu(r;q))-1$, is continuous and $\lim_{r\searrow0}u_\nu(r;q)=-1<0$ and $\lim_{r\nearrow \xi'_{\nu,1}(q)}u_\nu(r;q)=\infty>0$. By the  Intermediate Value Theorem, it follows that there exists a root of equation \eqref{lem convex Bessel eqn df2} in $(0,\xi'_{\nu,1}(q))$. Also, the function $u_\nu(r;q)$ is strictly increasing for $\nu>0$ because for $\nu>0$ and $r<\sqrt{\xi'_{\nu,1}(q)\xi_{\nu,1}(q)}$, the relation $\xi^2_{\nu,n}(q)(\xi'^2_{\nu,n}(q)-r^2)^2<\xi'^2_{\nu,n}(q)(\xi^2_{\nu,n}(q)-r^2)^2$ holds and from \eqref{Bessel expr df2} \begin{align*} u'_\nu(r;q)&>\left(8r\sum_{n\geq1}\left(\frac{\xi'^2_{\nu,n}(q)}{(\xi'^2_{\nu,n}(q)-r^2)^2}-\frac{\xi^2_{\nu,n}(q)}{(\xi^2_{\nu,n}(q)-r^2)^2}\right)\right)\\ &\quad{}\quad{}\quad{}\quad{}\left(2r^2\sum_{n\geq1}\left(\frac{1}{\xi'^2_{\nu,n}(q)-r^2}-\frac{1}{\xi^2_{\nu,n}(q)-r^2}\right)\right)>0.  \end{align*} Thus, the root is unique in $(0,\xi'_{\nu,1}(q))$ and hence the lemniscate convex radius of $f_\nu(z;q)$ is the unique root of \eqref{lem convex Bessel eqn df2} in $(0,\xi'_{\nu,1}(q))$. An equivalent form of this equation is given by equation \eqref{lem convex Bessel eqn fs} for $s=2$.
	
	We now determine the radii of lemniscate convexity of normalized $q$-Bessel functions $g_\nu(\cdot;q)$ and $h_\nu(\cdot;q)$. Since $g_\nu(z;q)$ is the normalization of the $q$-Bessel function $J_\nu(z;q)$ given by \eqref{Bessel norm g2}, therefore $g'_\nu(z;q)=2^\nu c_\nu(q)z^{-\nu}(zJ'_\nu(z;q)+(1-\nu)J_\nu(z;q))$. 	Using \cite[Lemma 8]{MR3423439}, the Hadamard factorization of $g'_\nu(z;q)$ is given by
	\begin{equation*}
	g'_\nu(z;q)=\prod_{n\geq1}\left(1-\frac{z^2}{\alpha^2_{\nu,n}(q)}\right),
	\end{equation*}
	where $\alpha_{\nu,n}(q)$ is the $n^\text{th}$ positive zero of $zJ'_\nu(z;q)+(1-\nu)J_\nu(z;q)$ satisfying $\alpha_{\nu,1}(q)<\alpha_{\nu,2}(q)<\cdots$ and hence we have the relation
	\begin{equation}\label{Bessel eqn dg2}
	\begin{split}
	1+\frac{zg''_\nu(z;q)}{g'_\nu(z;q)}&=1+\frac{z^2J''_\nu(z;q)+2(1-\nu)zJ'_\nu(z;q)+\nu(\nu-1)J_\nu(z;q)}{zJ'_\nu(z;q)+(1-\nu)J_\nu(z;q)}\\
	&=1-\sum_{n\geq1}\frac{2z^2}{\alpha^2_{\nu,n}(q)-z^2}.
	\end{split}
	\end{equation}
	We mimic the proof of Theorem \ref{lem starlike q Bessel} for $|z|<\alpha_{\nu,1}(q)$ so that  $g_\nu(z;q)$ satisfies the inequality \[ \left|\left(1+\frac{zg''_\nu(z;q)}{g'_\nu(z;q)}\right)^2 -1\right| \leq \left(\frac{|z|g''_\nu(|z|;q)}{g'_\nu(|z|;q)}\right)^2 -2\left(\frac{|z|g''_\nu(|z|;q)}{g'_\nu(|z|;q)}\right).   \] Hence, it follows that the lemniscate convex radius of $g_\nu(\cdot;q)$, $r^c_\mathcal{L}(g_\nu)$, is the unique positive root of the equation \[\left(\frac{rg''_\nu(r;q)}{g'_\nu(r;q)}\right)^2 -2\left(\frac{rg''_\nu(r;q)}{g'_\nu(r;q)}\right) -1=0\] in $(0,\alpha_{\nu,1}(q))$. The above equation is equivalent to equation \eqref{lem convex Bessel eqn gs} with $s=2$. Similarly, using the Hadamard factorization of $h'_\nu(z;q)$ given by $h'_\nu(z;q) =\prod_{n\geq1} (1-z/\beta^2_{\nu,n}(q))$, where $\beta_{\nu,n}(q)$ is the $n^\text{th}$ positive zero of $zJ'_\nu(z;q)+(2-\nu)J_\nu(z;q)$ such  that $\beta_{\nu,1}(q) <\beta_{\nu,2}(q) <\cdots$, the radius of lemniscate convexity for $h_\nu(\cdot;q)$, $r^c_\mathcal{L}(h_\nu)$, is the unique root of equation \eqref{lem convex Bessel eqn hs} for $s=2$ in $(0,\beta_{\nu,1}(q))$.
\end{proof}

%>>>>>>>>>>>>>>>>>>>>>>>>>>>>>>>>>>>>>>>>>>>>>>>>>>>>>>>>>>>>>>>>>>>>>>>>>>>>>>>>>>>>>>>>>>>>>>>>>>>>
It is known that the Bessel function of first kind admits the Weierstrass Decomposition \cite[p.498]{MR1349110} given by
\begin{equation*}\label{jv} J_\nu(z)=\frac{z^\nu}{2^\nu\Gamma(\nu+1)}\prod_{n\geq1}\left(1-\frac{z^2}{\xi_{\nu,n}^2}\right), \end{equation*}
where $\xi_{\nu,n}$ denotes the $n^\text{th}$ positive zero of the Bessel function $J_\nu$. Also, the zeros of the Bessel function of first kind satisfy the inequality $\xi_{\nu,1}<\xi_{\nu,2}<\cdots$ for $\nu>-1$. This infinite product is uniformly convergent on the compact subsets of $\mathbb{C}$. We have that Jackson and Hahn-Exton $q$-Bessel functions are $q$-extensions of classical Bessel function of first kind such that $J^{(2)}_\nu((1-z)q;q)\rightarrow J_\nu(z)$ and $J^{(3)}_\nu((1-z)q;q)\rightarrow J_\nu(2z)$ as $q\nearrow1$ for $\nu>-1$. Thus, the radii of lemniscate starlikeness of normalizations of Bessel function of first kind, given by \eqref{Bessel norm f}, \eqref{Bessel norm g} and \eqref{Bessel norm h}, are obtained as follows.
\begin{corollary}
	Let $\xi_{\nu,1}$ denote the first positive zero of the Bessel function of first kind $J_\nu$. If $\nu>0$, the function $f_\nu$ and if $\nu>-1$, the functions $g_\nu$ and $h_\nu$ have their radii of lemniscate starlikeness, $r^*_\mathcal{L}(f_\nu)$, $r^*_\mathcal{L}(g_\nu)$ and $r^*_\mathcal{L}(h_\nu)$, respectively to be unique positive root of the equations (respectively)
	\begin{align*}
	&r^2(J'_\nu(r))^2-4r\nu J'_\nu(r)J'_\nu(r)+2\nu^2(J_\nu(r))^2=0,\\
	&(rJ'_\nu(r)-\nu J_\nu(r))^2-2(r J'_\nu(r)-\nu J_\nu(r))J_\nu(r)-(J_\nu(r))^2=0
	\intertext{and}
	&(\sqrt{r}J'_\nu(\sqrt{r})-\nu J_\nu(\sqrt{r}))^2-4(\sqrt{r} J'_\nu(\sqrt{r})-\nu J_\nu(\sqrt{r})) J_\nu(\sqrt{r})-4(J_\nu(\sqrt{r}))^2=0
	\end{align*} in $(0,\xi_{\nu,1})$.
\end{corollary}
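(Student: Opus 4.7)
The plan is to mimic the proof of Theorem \ref{lem starlike q Bessel} directly, using the Weierstrass decomposition of $J_\nu$ displayed just above the corollary in place of the Hadamard factorizations \eqref{jqv}. One could alternatively deduce the corollary by letting $q\nearrow 1$ in Theorem \ref{lem starlike q Bessel}, but a direct argument avoids having to justify continuity of roots as $q\to 1$.

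First I would logarithmically differentiate the Weierstrass product of $J_\nu$ to obtain
\[
\frac{zJ'_\nu(z)}{J_\nu(z)} = \nu - \sum_{n\geq 1}\frac{2z^2}{\xi_{\nu,n}^2 - z^2},
\]
the exact analog of \eqref{Bessel expr jq}. Inserting this into the normalizations \eqref{Bessel norm f}--\eqref{Bessel norm h} yields sum expressions for $zf'_\nu/f_\nu$, $zg'_\nu/g_\nu$ and $zh'_\nu/h_\nu$ that mirror \eqref{Bessel expr f2}--\eqref{Bessel expr h2}. Applying the triangle inequality on the disk $|z|<\xi_{\nu,1}$ then gives, for $p$ equal to $f_\nu$ (with $\nu>0$) or $g_\nu$ or $h_\nu$ (with $\nu>-1$),
\[
\left|\left(\frac{zp'(z)}{p(z)}\right)^2 - 1\right| \leq \left(\frac{|z|p'(|z|)}{p(|z|)}\right)^2 - 4\left(\frac{|z|p'(|z|)}{p(|z|)}\right) + 3,
\]
which is exactly the bound derived for the $q$-Bessel case in the proof of Theorem \ref{lem starlike q Bessel}.

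The radius of lemniscate starlikeness is therefore bounded below by the smallest positive root in $(0,\xi_{\nu,1})$ of the scalar equation $(rp'(r)/p(r))^2 - 4(rp'(r)/p(r)) + 2 = 0$. Clearing denominators using the Weierstrass product of $J_\nu$ rewrites this as the three equations displayed in the corollary (the first listed equation appears to have a typo: the middle factor should read $J_\nu(r) J'_\nu(r)$ in analogy with \eqref{lem star Bessel eqn fs}). Existence and uniqueness in $(0,\xi_{\nu,1})$ follow from the Intermediate Value Theorem combined with the monotonicity argument from Theorem \ref{lem starlike q Bessel}: the derivative of the auxiliary function is strictly positive because every summand $\xi_{\nu,n}^2/(\xi_{\nu,n}^2 - r^2)^2$ is positive, while its limits as $r\searrow 0$ and $r\nearrow \xi_{\nu,1}$ are $-1$ and $+\infty$. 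The corollary therefore presents no genuine obstacle; the Weierstrass product of $J_\nu$ has the same structural features (real positive zeros, uniform convergence on compacta) as the $q$-Bessel Hadamard products, and the entire argument transfers verbatim.
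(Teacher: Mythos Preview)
Your proposal is correct and matches the paper's intended approach: the paper does not give a separate proof of this corollary but instead records the Weierstrass decomposition of $J_\nu$ and the fact that the $q$-Bessel functions are $q$-extensions of $J_\nu$, leaving the reader to transfer the argument of Theorem~\ref{lem starlike q Bessel} verbatim, exactly as you outline. Your observation about the typo in the first displayed equation (the middle term should be $J_\nu(r)J'_\nu(r)$, not $J'_\nu(r)J'_\nu(r)$, in parallel with \eqref{lem star Bessel eqn fs}) is also correct.
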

In the next result, the lemniscate convex radii of the normalizations of Bessel function of first kind, given by \eqref{Bessel norm f}, \eqref{Bessel norm g} and \eqref{Bessel norm h}, are determined.
\begin{corollary}
	Let $\nu>-1$. For the Bessel function of first kind $J_\nu$, if $\nu>0$, then the lemniscate convex radius of the function $f_\nu$; $r^c_\mathcal{L}(f_\nu)$, is the unique positive root of the equation \[\left(\dfrac{rJ''_\nu(r)}{J'_\nu(r)}+\left(\dfrac{1}{\nu}-1\right)\dfrac{rJ'_\nu(r)}{J_\nu(r)}\right)^2-2\left(\dfrac{rJ''_\nu(r)}{J'_\nu(r)}+\left(\dfrac{1}{\nu}-1\right)\dfrac{rJ'_\nu(r)}{J_\nu(r)}\right)-1=0\] in $(0,\xi'_{\nu,1})$, where $\xi'_{\nu,1}$ is the first positive zero of $J'_\nu$ and that for $g_\nu$; $r^c_\mathcal{L}(g_\nu)$ is the unique positive root of the equation
	\begin{equation*}
	\begin{split}	&\left(\frac{r^2J''_\nu(r)+2(1-\nu)rJ'_\nu(r)+\nu(\nu-1)J_\nu(r)}{rJ'_\nu(r)+(1-\nu)J_\nu(r)}\right)^2\\
	&\quad{}\quad{}-2\left(\frac{r^2J''_\nu(r)+2(1-\nu)rJ'_\nu(r)+\nu(\nu-1)J_\nu(r)}{rJ'_\nu(r)+(1-\nu)J_\nu(r)}\right)-1=0
	\end{split}
	\end{equation*} in $(0,\alpha_{\nu,1})$, where $\alpha_{\nu,1}$ is the first positive zero of $zJ'_\nu(z)+(1-\nu)J_\nu(z)$. For the function $h_\nu$, it is $r^c_\mathcal{L}(h_\nu)$, the unique positive root of the equation
	\begin{equation*}
	\begin{split} &\left(\frac{rJ''_\nu(\sqrt{r})+(3-2\nu)\sqrt{r}J'_\nu(\sqrt{r})+\nu(\nu-2)J_\nu(\sqrt{r})}{2(\sqrt{r}J'_\nu(\sqrt{r})+(2-\nu)J_\nu(\sqrt{r}))}\right)^2\\
	&\quad{}\quad{}-2\left(\frac{rJ''_\nu(\sqrt{r})+(3-2\nu)\sqrt{r}J'_\nu(\sqrt{r})+\nu(\nu-2)J_\nu(\sqrt{r})}{2(\sqrt{r}J'_\nu(\sqrt{r})+(2-\nu)J_\nu(\sqrt{r}))}\right)-1=0
	\end{split}
	\end{equation*} in $(0,\beta_{\nu,1})$, where $\beta_{\nu,1}$ is the first positive zero of $zJ'_\nu(z)+(2-\nu)J_\nu(z)$. Also, the inequalities $r^c_\mathcal{L}(f_\nu) <\xi'_{\nu,1} <\xi_{\nu,1}$, $r^c_\mathcal{L}(g_\nu) <\alpha_{\nu,1} <\xi_{\nu,1}$ and $r^c_\mathcal{L}(h_\nu) <\beta_{\nu,1} <\xi_{\nu,1}$ hold, where $\xi_{\nu,1}$ denote the first positive zero of $J_\nu$.
\end{corollary}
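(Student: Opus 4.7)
The plan is to mimic the proof of Theorem \ref{lem convex q Bessel} step by step, replacing the Hadamard factorizations of the $q$-Bessel functions by the classical Weierstrass products for $J_\nu$, $J'_\nu$, and the auxiliary entire functions $zJ'_\nu(z)+(1-\nu)J_\nu(z)$ and $zJ'_\nu(z)+(2-\nu)J_\nu(z)$. All of these functions are known (see for instance the factorizations used in \cite{MR3182021}) to have only real, simple zeros for $\nu>-1$, and the classical interlacings $\xi'_{\nu,n}<\xi_{\nu,n}$, $\alpha_{\nu,n}<\xi_{\nu,n}$, $\beta_{\nu,n}<\xi_{\nu,n}$ hold. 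Taking logarithmic derivatives of these products and combining with the normalizations \eqref{Bessel norm f}, \eqref{Bessel norm g}, \eqref{Bessel norm h}, I would first derive the Mittag-Leffler expansion
\[
1+\frac{zf''_\nu(z)}{f'_\nu(z)}=1-\sum_{n\geq1}\frac{2z^2}{\xi'^2_{\nu,n}-z^2}-\Bigl(\frac{1}{\nu}-1\Bigr)\sum_{n\geq1}\frac{2z^2}{\xi^2_{\nu,n}-z^2},
\]
together with the analogous identities for $1+zg''_\nu(z)/g'_\nu(z)$ in terms of $\alpha_{\nu,n}$ and for $1+zh''_\nu(z)/h'_\nu(z)$ in terms of $\beta_{\nu,n}$.

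For $g_\nu$, $h_\nu$, and for $f_\nu$ when $0<\nu\leq1$ (so that $1/\nu-1\geq0$), a termwise triangle inequality applied to these Mittag-Leffler sums yields
\[
\Bigl|\Bigl(1+\frac{zp''(z)}{p'(z)}\Bigr)^2-1\Bigr|\leq\Bigl(\frac{|z|p''(|z|)}{p'(|z|)}\Bigr)^2-2\frac{|z|p''(|z|)}{p'(|z|)}
\]
for $|z|$ less than the relevant first positive zero ($\xi'_{\nu,1}$, $\alpha_{\nu,1}$, or $\beta_{\nu,1}$). When $\nu>1$ the coefficient $1/\nu-1$ turns negative and the termwise triangle inequality no longer applies; the same estimate is then recovered from \eqref{relation} of \cite[Lemma 2.1]{MR3641791}, exactly as in the proof of Theorem \ref{lem convex q Bessel}. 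Thus the lemniscate convex radius of each of $f_\nu$, $g_\nu$, $h_\nu$ is bounded below by the smallest positive root of $(rp''(r)/p'(r))^2-2(rp''(r)/p'(r))-1=0$, and a straightforward algebraic manipulation identifies this equation with the one stated in the corollary.

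To promote the lower bound to an equality and to guarantee uniqueness, I would apply the Intermediate Value Theorem to the real-variable function $u(r):=(rp''(r)/p'(r))^2-2rp''(r)/p'(r)-1$ on the interval $(0,t)$, where $t$ is $\xi'_{\nu,1}$, $\alpha_{\nu,1}$, or $\beta_{\nu,1}$ as appropriate. The limits $u(0^+)=-1$ and $u(t^-)=+\infty$ are immediate from the Mittag-Leffler expansion, and the strict monotonicity of $u$ on $(0,t)$ is established by the same differentiation argument used in Theorem \ref{lem convex q Bessel}, invoking the interlacing of zeros. The claimed chain of inequalities $r^c_\mathcal{L}(f_\nu)<\xi'_{\nu,1}<\xi_{\nu,1}$, $r^c_\mathcal{L}(g_\nu)<\alpha_{\nu,1}<\xi_{\nu,1}$, and $r^c_\mathcal{L}(h_\nu)<\beta_{\nu,1}<\xi_{\nu,1}$ then follows at once from the location of the root together with the classical interlacings.

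The main obstacle, as in the $q$-Bessel case, is precisely the regime $\nu>1$ for the function $f_\nu$: once $1/\nu-1$ becomes negative, the two Mittag-Leffler sums cannot be bounded termwise, and one must exploit the strict interlacing $\xi'_{\nu,n}<\xi_{\nu,n}$ through the pointwise inequality \eqref{relation}. For $g_\nu$ and $h_\nu$ only a single sum appears in the corresponding logarithmic derivative, so this subtlety does not arise and the argument proceeds mechanically.
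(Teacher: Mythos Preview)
Your proposal is correct and follows essentially the same approach as the paper: the paper presents this result as an immediate corollary of Theorem~\ref{lem convex q Bessel}, obtained either by letting $q\nearrow1$ in the $q$-Bessel result or, equivalently, by rerunning the same argument with the classical Weierstrass factorizations of $J_\nu$, $J'_\nu$, $g'_\nu$, and $h'_\nu$ in place of their $q$-analogues. Your plan to mimic the proof of Theorem~\ref{lem convex q Bessel} directly---including the split into $0<\nu\leq1$ versus $\nu>1$ via \eqref{relation}, and the Intermediate Value/monotonicity argument for uniqueness---is exactly this.
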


%>>>>>>>>>>>>>>>>>>>>>>>>>>>>>>>>>>>>>>>>>>>>>>>>>>>>>>>>>>>>>>>>>>>>>>>>>>>>>>>>>>>>>>>>>>>>>>>>>>
The radii of lemniscate starlikeness and lemniscate convexity for normalized Lommel functions are determined and the following lemma from \cite{MR3572758}, is used to prove the main results.
\begin{lemma}\label{lemma Lommel}
	Let \[ \varphi_k(z)={}_1F_2\left(1;\frac{\mu-k+2}{2},\frac{\mu+k+3}{2};\frac{-z^2}{4}\right),\] where $z\in\mathbb{C}$, $\mu\in\mathbb{R}$ and $k\in\{0,1,2,\ldots\}$ such that $\mu-k\not\in\{0,-1,\ldots\}$. Then, $\varphi_k$ is an entire function of order $\rho=1$. Consequently, the Hadamard's factorization of $\varphi_k$ is of the form \[ \varphi_k(z)=\prod_{n\geq1}\left(1-\frac{z^2}{z^2_{\mu,k,n}}\right),  \] where $z_{\mu,k,n}$ is the $n^\text{th}$ positive zero of the function $\varphi_k$, and the infinite product is absolutely convergent. Moreover, for $z$, $\mu$ and $k$ as above, we have $ (\mu-k+1)\varphi_{k+1}(z)=(\mu-k+1)\varphi_k(z)+z\varphi'_k(z)$. Also,
	\begin{equation*}\label{phi k}
	\sqrt{z}s_{\mu-k-\frac{1}{2},\frac{1}{2}}(z)=\frac{z^{\mu-k+1}}{(\mu-k)(\mu-k+1)}\varphi_k(z).
	\end{equation*}
\end{lemma}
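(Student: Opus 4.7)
The plan is to treat the four assertions of the lemma in sequence, using mostly the explicit power series of $\varphi_k$ together with one appeal to Hadamard's factorization theorem. Expanding the ${}_1F_2$ series,
\[\varphi_k(z)=\sum_{n\geq 0}\frac{(-1)^{n}z^{2n}}{4^{n}\bigl(\tfrac{\mu-k+2}{2}\bigr)_{n}\bigl(\tfrac{\mu+k+3}{2}\bigr)_{n}}=:\sum_{n\geq 0} a_{2n}z^{2n},\]
Stirling's formula gives $(c)_{n}=\Gamma(c+n)/\Gamma(c)\sim n!\,n^{c-1}$, so $|a_{2n}|^{-1}\sim C\,(n!)^{2}\,n^{\alpha}$ for a suitable constant $\alpha$. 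Substituting into the coefficient formula for the order,
\[\rho=\limsup_{n\to\infty}\frac{n\log n}{-\log|a_{n}|},\]
yields $\rho=1$, and $\varphi_k$ is entire since the series has infinite radius of convergence.

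Next, $\varphi_k$ is an even function, so $\varphi_k(z)=\psi_k(z^{2})$ for an entire function $\psi_k$. The same Stirling computation applied to the coefficients of $\psi_k$ (now viewed as coefficients of $w^{n}$) shows that $\psi_k$ has order $1/2<1$, so by Hadamard's theorem $\psi_k$ has genus zero and admits the absolutely convergent factorization
\[\psi_k(w)=\prod_{n\geq 1}\Bigl(1-\frac{w}{w_{n}}\Bigr),\]
where $\{w_n\}$ are its zeros. Setting $w=z^{2}$ and $w_{n}=z_{\mu,k,n}^{2}$ produces the claimed product for $\varphi_k$, provided the $w_n$ are positive reals, i.e.\ the zeros of $\varphi_k$ are real. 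I expect this reality statement to be the single nontrivial input: one would cite the corresponding result in \cite{MR3572758}, or, using the last identity of the lemma itself, transfer the question to the real zeros of the Lommel function $s_{\mu-k-1/2,1/2}$, whose reality in the half-integer case is classical (the function is essentially trigonometric).

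For the contiguous relation $(\mu-k+1)\varphi_{k+1}(z)=(\mu-k+1)\varphi_{k}(z)+z\varphi'_{k}(z)$, I would match the coefficient of $z^{2n}$ on both sides: the right-hand side contributes $(\mu-k+1+2n)\,a_{2n}^{(k)}$, which the Pochhammer identity $(c)_{n}(c+n)=c\,(c+1)_{n}$ collapses to $(\mu-k+1)\,a_{2n}^{(k+1)}$. The closing identity $\sqrt{z}\,s_{\mu-k-1/2,1/2}(z)=z^{\mu-k+1}\varphi_k(z)/[(\mu-k)(\mu-k+1)]$ is then just the substitution $\mu\mapsto \mu-k-\tfrac{1}{2}$, $\nu\mapsto \tfrac{1}{2}$ in the defining series of $s_{\mu,\nu}$, followed by factoring out $\sqrt{z}$. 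The principal obstacle is, as flagged, the reality of the zeros of $\varphi_k$; every other step is routine series arithmetic.
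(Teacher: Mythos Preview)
The paper does not prove this lemma at all: it is quoted as a known result from \cite{MR3572758} (Baricz--Koumandos) and used as a black box in the subsequent theorems. There is therefore no argument in the paper to compare your sketch against.

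That said, your outline is sound for the parts it treats in detail. The order computation via Stirling's formula, the passage to the half-order even companion $\psi_k(w)=\varphi_k(\sqrt{w})$ to get genus zero, and the identification of the final identity with the definition of $s_{\mu,\nu}$ are all standard and correct. You have also correctly isolated the one genuinely nontrivial ingredient, namely the reality of the zeros of $\varphi_k$ in the stated parameter range; this is precisely what is established in \cite{MR3572758}, and the present paper simply imports it. Your proposed verification of the contiguous relation by matching the coefficient of $z^{2n}$ is the natural route, though note that both Pochhammer parameters shift when passing from $\varphi_k$ to $\varphi_{k+1}$, so the single identity $(c)_n(c+n)=c\,(c+1)_n$ you cite must be applied to each parameter, not just one; the bookkeeping is slightly more involved than your one-line summary suggests.
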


The following result presents the radii of lemniscate starlikeness for the normalized Lommel functions $f_\mu$, $g_\mu$ and $h_\mu$ given by \eqref{Lommel norm f}, \eqref{Lommel norm g} and \eqref{Lommel norm h}, respectively.
\begin{theorem}\label{lem starlike Lommel}
	Let $\mu\in(-1,1)\setminus\{0\}$. Then the lemniscate starlike radius of the normalized Lommel  function of first kind $f_\mu$; $r^*_\mathcal{L}(f_\mu)$ is given by the smallest positive root of the equation
	\begin{equation}\label{lem star Lommel eqn fs}
	\begin{split}
	&\frac{1}{\left(\mu+\frac{1}{2}\right)^2}\left(\frac{rs'_{\mu-\frac{1}{2},\frac{1}{2}}(r)}{s_{\mu-\frac{1}{2},\frac{1}{2}}(r)}\right)^2-2=0\quad{}\text{for }-1<\mu<\frac{-1}{2}\\
	&\frac{1}{\left(\mu+\frac{1}{2}\right)^2}\left(\frac{rs'_{\mu-\frac{1}{2},\frac{1}{2}}(r)}{s_{\mu-\frac{1}{2},\frac{1}{2}}(r)}\right)^2-\frac{4}{\left(\mu+\frac{1}{2}\right)}\frac{rs'_{\mu-\frac{1}{2},\frac{1}{2}}(r)}{s_{\mu-\frac{1}{2},\frac{1}{2}}(r)}+2=0\quad{}\text{for}\quad{}\frac{-1}{2}<\mu<1.
	\end{split}
	\end{equation}
	For the functions $g_\mu$ and $h_\mu$; $r^*_\mathcal{L}(g_\mu)$ and $r^*_\mathcal{L}(h_\mu)$ are the smallest positive roots of the equations (respectively)
	\begin{align}
	&\left(\frac{rs'_{\mu-\frac{1}{2},\frac{1}{2}}(r)}{s_{\mu-\frac{1}{2},\frac{1}{2}}(r)}\right)^2-(2\mu+3)\left(\frac{rs'_{\mu-\frac{1}{2},\frac{1}{2}}(r)}{s_{\mu-\frac{1}{2},\frac{1}{2}}(r)}\right)+\mu^2+3\mu+\frac{1}{4}=0\label{lem star Lommel eqn gs}
	\intertext{and}
	&4\left(\frac{\sqrt{r}s'_{\mu-\frac{1}{2},\frac{1}{2}}(\sqrt{r})}{s_{\mu-\frac{1}{2},\frac{1}{2}}(\sqrt{r})}\right)^2-4(5+2\mu)\left(\frac{\sqrt{r}s'_{\mu-\frac{1}{2},\frac{1}{2}}(\sqrt{r})}{s_{\mu-\frac{1}{2},\frac{1}{2}}(\sqrt{r})}\right)+4\mu^2+20\mu-7=0. \label{lem star Lommel eqn hs}
	\end{align}
	Moreover, if $\tau$ represents the first positive zero of Lommel function of first kind $s_{\mu-\frac{1}{2},\frac{1}{2}}$, then the radii $r^*_\mathcal{L}(f_\mu)$, $r^*_\mathcal{L}(g_\mu)$ and $r^*_\mathcal{L}(h_\mu)$ are the unique roots of equations \eqref{lem star Lommel eqn fs}, \eqref{lem star Lommel eqn gs} and \eqref{lem star Lommel eqn hs}, respectively in $(0,\tau)$.
\end{theorem}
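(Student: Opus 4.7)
The plan is to adapt the argument from Theorem~\ref{lem starlike q Bessel}, substituting the Hadamard factorization of $s_{\mu-\frac{1}{2},\frac{1}{2}}$ that comes from Lemma~\ref{lemma Lommel}. Taking $k=0$ there gives
\[
s_{\mu-\frac{1}{2},\frac{1}{2}}(z) \;=\; \frac{z^{\mu+1/2}}{\mu(\mu+1)}\prod_{n\geq 1}\left(1-\frac{z^{2}}{z_{\mu,0,n}^{2}}\right),
\]
and a logarithmic differentiation produces the Lommel analogue of \eqref{Bessel expr jq},
\[
\frac{z\,s'_{\mu-\frac{1}{2},\frac{1}{2}}(z)}{s_{\mu-\frac{1}{2},\frac{1}{2}}(z)} \;=\; \mu+\tfrac{1}{2}-\sum_{n\geq 1}\frac{2z^{2}}{z_{\mu,0,n}^{2}-z^{2}}.
\]
Feeding this into the normalizations \eqref{Lommel norm f}--\eqref{Lommel norm h} yields the Mittag--Leffler-type expansions
\[
\frac{zf'_{\mu}(z)}{f_{\mu}(z)}=1-\frac{1}{\mu+1/2}\sum_{n\geq 1}\frac{2z^{2}}{z_{\mu,0,n}^{2}-z^{2}},\qquad \frac{zg'_{\mu}(z)}{g_{\mu}(z)}=1-\sum_{n\geq 1}\frac{2z^{2}}{z_{\mu,0,n}^{2}-z^{2}},
\]
\[
\frac{zh'_{\mu}(z)}{h_{\mu}(z)}=1-\sum_{n\geq 1}\frac{z}{z_{\mu,0,n}^{2}-z},
\]
which are the engines for the rest of the proof.

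For $g_{\mu}$ and $h_{\mu}$ the argument runs parallel to the Bessel case: for $|z|<\tau$, factor $(zp'_{\mu}/p_{\mu})^{2}-1=(zp'_{\mu}/p_{\mu}-1)(zp'_{\mu}/p_{\mu}+1)$ and apply the triangle inequality term-by-term to obtain
\[
\left|\left(\frac{zp'_{\mu}(z)}{p_{\mu}(z)}\right)^{2}-1\right|\;\le\;\left(\frac{rp'_{\mu}(r)}{p_{\mu}(r)}\right)^{2}-4\,\frac{rp'_{\mu}(r)}{p_{\mu}(r)}+3,\qquad r=|z|,
\]
so the lemniscate starlike radius is the smallest positive root of $(rp'_{\mu}/p_{\mu})^{2}-4(rp'_{\mu}/p_{\mu})+2=0$. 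Substituting $rg'_{\mu}/g_{\mu}=\tfrac{1}{2}-\mu+rs'/s$ and, for $h_{\mu}$, $rh'_{\mu}/h_{\mu}=\tfrac{3-2\mu}{4}+\tfrac{1}{2}\sqrt{r}\,s'(\sqrt{r})/s(\sqrt{r})$ and expanding recovers \eqref{lem star Lommel eqn gs} and \eqref{lem star Lommel eqn hs} after routine algebra.

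The new phenomenon appears in $f_{\mu}$, where the coefficient $1/(\mu+1/2)$ can have either sign. For $\mu\in(-\tfrac12,1)$ the above Bessel-style estimate applies verbatim and leads to $(rf'_{\mu}/f_{\mu})^{2}-4(rf'_{\mu}/f_{\mu})+2=0$, which after replacing $rf'_{\mu}/f_{\mu}=(rs'/s)/(\mu+1/2)$ is the second branch of \eqref{lem star Lommel eqn fs}. For $\mu\in(-1,-\tfrac12)$, however, $rf'_{\mu}(r)/f_{\mu}(r)>1$ on $(0,\tau)$ since the leading sign in front of the sum flips; writing $\alpha=1/(\mu+1/2)<0$, both $|zf'_{\mu}/f_{\mu}-1|=|\alpha T(z)|\le|\alpha|T(r)$ and $|zf'_{\mu}/f_{\mu}+1|\le 2+|\alpha|T(r)=1+rf'_{\mu}(r)/f_{\mu}(r)$, so the cross terms cancel and the bound collapses to
\[
\left|\left(\frac{zf'_{\mu}(z)}{f_{\mu}(z)}\right)^{2}-1\right|\;\le\;\left(\frac{rf'_{\mu}(r)}{f_{\mu}(r)}\right)^{2}-1.
\]
Setting this strictly less than $1$ produces the single equation $(rs'/s)^{2}/(\mu+1/2)^{2}=2$, which is the first branch of \eqref{lem star Lommel eqn fs}. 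Finally, uniqueness of each root in $(0,\tau)$ is handled just as in Theorem~\ref{lem starlike q Bessel}: the auxiliary real functions are continuous on $(0,\tau)$, tend to a negative value at $0$ and to $+\infty$ at $\tau$ (using that $\tau$ is a simple zero of $s_{\mu-1/2,1/2}$), and a direct termwise differentiation of the partial-fraction expansions shows strict monotonicity, after which the Intermediate Value Theorem concludes. The main obstacle is the sign bookkeeping for $f_{\mu}$ that forces the two-branch statement of \eqref{lem star Lommel eqn fs}; the other equations reduce to standard algebraic manipulation.
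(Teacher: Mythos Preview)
Your proof is correct and follows essentially the same route as the paper: Hadamard factorization via Lemma~\ref{lemma Lommel}, the triangle-inequality bound on $|(zp'/p)^2-1|$, and uniqueness by monotonicity plus the Intermediate Value Theorem. The only organizational difference is in the range $\mu\in(-1,0)$ for $f_\mu$: the paper reaches the two branches of \eqref{lem star Lommel eqn fs} by the parameter shift $\mu\mapsto\mu-1$, $\varphi_0\mapsto\varphi_1$ (working with $f_{\mu-1}$ for $\mu\in(0,1)$ and then shifting back), whereas you handle the sign of $1/(\mu+\tfrac12)$ directly in the Mittag--Leffler expansion; your version is a bit more transparent but the two arguments are equivalent.
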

\begin{proof}
	We divide the proof for the function $f_\mu$ into the cases when $\mu\in(0,1)$ and when $\mu\in(-1,0)$. First suppose that $\mu\in(0,1)$. Since the zeros of the function $\varphi_0$ are real, let $\xi_{\mu,n}$ be the $n^\text{th}$ positive zero of the $\varphi_0$ such that $\xi_{\mu,1}<\xi_{\mu,2}<\cdots$. Thus, using Lemma \ref{lemma Lommel}, the Hadamard factorization of the Lommel function of first kind is given by
	\begin{equation*}\label{Lommel s}
	s_{\mu-\frac{1}{2},\frac{1}{2}}(z)=\frac{z^{\mu+\frac{1}{2}}}{\mu(\mu+1)}\prod_{n\geq 1}\left(1-\frac{z^2}{\xi_{\mu,n}^2}\right).
	\end{equation*} From the above equation, it follows that
	\begin{equation*}\label{Lommel expr s}
	\frac{zs'_{\mu-\frac{1}{2},\frac{1}{2}}(z)}{s_{\mu-\frac{1}{2},\frac{1}{2}}(z)}=\mu+\frac{1}{2}-\sum_{n\geq1}\frac{2z^2}{\xi_{\mu,n}^2-z^2}.
	\end{equation*}
	Since $f_\mu$ is a normalization of the function $s_{\mu-\frac{1}{2},\frac{1}{2}}$ given in \eqref{Lommel norm f}, we have the relation
	\begin{equation}\label{Lommel expr f}
	\frac{zf'_\mu(z)}{f_\mu(z)}=\frac{1}{\mu+\frac{1}{2}}\frac{zs'_{\mu-\frac{1}{2},\frac{1}{2}}(z)}{s_{\mu-\frac{1}{2},\frac{1}{2}}(z)}=1-\frac{1}{\mu+\frac{1}{2}}\sum_{n\geq1}\frac{2z^2}{\xi_{\mu,n}^2-z^2}.
	\end{equation}
	Following the procedure as in Theorem \ref{lem starlike q Bessel}, it is clear that the radius of lemniscate starlikeness for the function $f_\mu$; $r^*_\mathcal{L}(f_\mu)$, is the smallest positive root of the equation \[\left(\frac{rf'_\mu(r)}{f_\mu(r)}\right)^2 -4\left(\frac{rf'_\mu(r)}{f_\mu(r)}\right) +2=0. \]
	
	If $\mu\in(-1,0)$, then $\mu+1\in(0,1)$. Using the same technique as in proof of Theorem \ref{lem starlike q Bessel} for $\mu\in(0,1)$, substituting $\mu$ by $\mu-1$ and $\varphi_0$ by $\varphi_1$ with $\zeta_{\mu,n}$ being the $n^\text{th}$ positive zero of the function $\varphi_1$, we get
	\begin{align*}
	\left|\left(\frac{zf'_{\mu-1}(z)}{f_{\mu-1}(z)}\right)^2-1\right|&\leq \frac{1}{\left(\mu-\frac{1}{2}\right)^2}\left(\sum_{n\geq 1}\frac{2|z|^2}{\zeta^2_{\mu,n}-|z|^2}\right)^2+\frac{2}{\left|\mu-\frac{1}{2}\right|}\left(\sum_{n\geq 1}\frac{2|z|^2}{\zeta^2_{\mu,n}-|z|^2}\right)
	\end{align*} holds for $\mu\in(0,1)$ with $\mu\neq 1/2$. Simplifying the above equation, it is easy to see that \[  \left|\left(\frac{zf'_{\mu-1}(z)}{f_{\mu-1}(z)}\right)^2-1\right|\leq \begin{cases}
	&\frac{1}{\left(\mu-\frac{1}{2}\right)^2}\left(\sum_{n\geq 1}\frac{2|z|^2}{\zeta^2_{\mu,n}-|z|^2}\right)^2-\frac{2}{\left(\mu-\frac{1}{2}\right)}\left(\sum_{n\geq 1}\frac{2|z|^2}{\zeta^2_{\mu,n}-|z|^2}\right) \text{ for } \mu\in(0,\frac{1}{2}),\\
	&\frac{1}{\left(\mu-\frac{1}{2}\right)^2}\left(\sum_{n\geq 1}\frac{2|z|^2}{\zeta^2_{\mu,n}-|z|^2}\right)^2+\frac{2}{\left(\mu-\frac{1}{2}\right)}\left(\sum_{n\geq 1}\frac{2|z|^2}{\zeta^2_{\mu,n}-|z|^2}\right) \text{ for } \mu\in(\frac{1}{2},1).
	\end{cases} \]
	Using above inequality and equation \eqref{Lommel expr f}, replacing $z$ by $|z|$, the relation \[ \left|\left(\frac{zf'_{\mu-1}(z)}{f_{\mu-1}(z)}\right)^2-1\right|\leq  \begin{cases}
	&\left(\dfrac{|z|f'_{\mu-1}(|z|)}{f_{\mu-1}(|z|)}\right)^2-1 \quad\text{for}\quad \mu\in(0,\frac{1}{2}),\\
	&\left(\dfrac{|z|f'_{\mu-1}(|z|)}{f_{\mu-1}(|z|)}\right)^2-4\left(\dfrac{|z|f'_{\mu-1}(|z|)}{f_{\mu-1}(|z|)}\right)+3 \quad \text{for}\quad \mu\in(\frac{1}{2},1)
	\end{cases}  \]
	holds. Thus, on replacing $\mu$ by $\mu+1$ and combining both the cases, we observe that the function $f_\mu(z)$ is lemniscate starlike for $|z|<r^*$, where $r^*$ is the smallest root of the equation
	\begin{equation}\label{lem star Lommel eqn f}
	\begin{split}
	&\left(\frac{rf'_{\mu}(r)}{f_{\mu}(r)}\right)^2-2=0\quad{}\text{for}\quad{}-1<\mu<\frac{-1}{2}\\
	&\left(\frac{rf'_{\mu}(r)}{f_{\mu}(r)}\right)^2-4\left(\frac{rf'_{\mu}(r)}{f_{\mu}(r)}\right)+2=0 \quad{}\text{for}\quad{}\frac{-1}{2}<\mu<1\text{,}\quad{}\mu\neq 0.
	\end{split}
	\end{equation}
	
	Moreover, for $-1<\mu<1$, $\mu\neq0$ with $\xi_{\mu,1}$ being the first positive zero of $s_{\mu-\frac{1}{2},\frac{1}{2}}$, the function $u_\mu\colon(0,\xi_{\mu,1})\to\mathbb{R}$, defined by
	\[ u_\mu(r):=\begin{cases}
	&\left(\dfrac{rf'_{\mu}(r)}{f_{\mu}(r)}\right)^2-2=0\quad{}\text{for}\quad{}-1<\mu<\dfrac{-1}{2}\\
	&\left(\dfrac{rf'_{\mu}(r)}{f_{\mu}(r)}\right)^2-4\left(\dfrac{rf'_{\mu}(r)}{f_{\mu}(r)}\right)+2=0 \quad{}\text{for}\quad{}\dfrac{-1}{2}<\mu<1,\quad{}(\mu\neq 0)
	\end{cases}  \]
	is $u_\mu$ is continuous and $\lim_{r\searrow0}u_\mu(r)=-1<0$ and $\lim_{r\nearrow\xi_{\mu,1}}u_\mu(r)=\infty>0$. This ensures the existence of a zero of $u_\mu$ in $(0,\xi_{\mu,1})$. Furthermore, $u_\mu$ is strictly increasing as \[
	u'_\mu(r)=\begin{cases}
	\dfrac{-2}{\mu+\frac{1}{2}}\left(1-\dfrac{1}{\mu+\frac{1}{2}}\sum\limits_{n\geq1}\dfrac{2r^2}{\xi_{\mu,n}^2-r^2}\right)\left(\sum\limits_{n\geq1}\dfrac{4r\xi_{\mu,n}^2}{(\xi_{\mu,n}^2-r^2)^2}\right)\text{ for }-1<\mu<\dfrac{-1}{2}\\
	\dfrac{2}{\mu+\frac{1}{2}}\left(1+\dfrac{1}{\mu+\frac{1}{2}}\sum\limits_{n\geq1}\dfrac{2r^2}{\xi_{\mu,n}^2-r^2}\right)\left(\sum\limits_{n\geq1}\dfrac{4r\xi_{\mu,n}^2}{(\xi_{\mu,n}^2-r^2)^2}\right)\text{ for }\dfrac{-1}{2}<\mu<1,\quad (\mu\neq 0)
	\end{cases} \] is positive in $(0,\xi_{\mu,1})$. Thus, the lemniscate starlike radius of $f_\mu$ is the unique root of \eqref{lem star Lommel eqn f} in $(0,\xi_{\mu,1})$. It is clear using equation \eqref{Lommel expr f} that the zeros of equation \eqref{lem star Lommel eqn f} coincide with those of equation \eqref{lem star Lommel eqn fs}. Therefore, the radius of lemniscate starlikeness of the function $f_\mu$ is the unique positive zero of equation \eqref{lem star Lommel eqn fs} in $(0,\xi_{\mu,1})$.
	
	Now since $g_\mu$ and $h_\mu$ are normalizations of the Lommel function of first kind as mentioned in \eqref{Lommel norm g} and \eqref{Lommel norm h}, thus for $\mu\in(-1,1)\setminus\{0\}$, we have
	\begin{equation}\label{Lommel expr g}
	\frac{zg'_\mu(z)}{g_\mu(z)}=-\mu+\frac{1}{2}+\frac{zs'_{\mu-\frac{1}{2},\frac{1}{2}}(z)}{s_{\mu-\frac{1}{2},\frac{1}{2}}(z)}=1-\sum_{n\geq1}\frac{2z^2}{\xi_{\mu,n}^2-z^2}
	\end{equation} and
	\begin{equation}\label{Lommel expr h}
	\frac{zh'_\mu(z)}{h_\mu(z)}=\frac{3-2\mu}{4}+\frac{1}{2}\frac{\sqrt{z}s'_{\mu-\frac{1}{2},\frac{1}{2}}(\sqrt{z})}{s_{\mu-\frac{1}{2},\frac{1}{2}}(\sqrt{z})}=1-\sum_{n\geq 1}\frac{z}{\xi^2_{\mu,n}-z},
	\end{equation} where $\xi_{\mu,n}$ are the $n^\text{th}$ positive zero of the function $\varphi_0$ such that $\xi_{\mu,1}<\xi_{\mu,2}<\cdots$. Again as in Theorem \ref{lem starlike q Bessel}, it follows that the radii of lemniscate starlikeness of the functions $g_\mu(z)$ and $h_\mu(z)$ are the unique positive root of the equation \[ \left(\frac{rp'(r)}{p(r)}\right)^2-4\left(\frac{rp'(r)}{p(r)}\right)+2=0 \] in $(0,\xi_{\mu,1})$ and $(0,\sqrt{\xi_{\mu,1}})$, where the function $p$ collectively stands for the functions $g_\mu$ and $h_\mu$, respectively. Thus, the theorem holds.
\end{proof}

%>>>>>>>>>>>>>>>>>>>>>>>>>>>>>>>>>>>>>>>>>>>>>>>>>>>>>>>>>>>>>>>>>>>>>>>>>>>>>>>>>>>>>>>>>>>>>>>>>	
In the next theorem, we determine the radii of lemniscate convexity of the normalizations of Lommel function of first kind.
\begin{theorem}\label{lem convex Lommel}
	For the Lommel function of first kind $s_{\mu-\frac{1}{2},\frac{1}{2}}$, the lemniscate convex radius of the function $f_\mu$ with $\mu\in(-1/2,1)\setminus\{0\}$ and that of the functions $g_\mu$ and $h_\mu$ with $\mu\in(-1,1)\setminus\{0\}$; $r^c_\mathcal{L}(f_\mu)$, $r^c_\mathcal{L}(g_\mu)$ and $r^c_\mathcal{L}(h_\mu)$ are the smallest positive root of the equations (respectively)
	\begin{equation}\label{lem convex Lommel eqn fs}
	\begin{split}
	&\left(\frac{rs''_{\mu-\frac{1}{2},\frac{1}{2}}(r)}{s'_{\mu-\frac{1}{2},\frac{1}{2}}(r)}+\left(\frac{1}{\mu+\frac{1}{2}}-1\right)\frac{rs'_{\mu-\frac{1}{2},\frac{1}{2}}(r)}{s_{\mu-\frac{1}{2},\frac{1}{2}}(r)}\right)^2\\
	&\quad{}\quad{}\quad{}\quad{}-2\left(\frac{rs''_{\mu-\frac{1}{2},\frac{1}{2}}(r)}{s'_{\mu-\frac{1}{2},\frac{1}{2}}(r)}+\left(\frac{1}{\mu+\frac{1}{2}}-1\right)\frac{rs'_{\mu-\frac{1}{2},\frac{1}{2}}(r)}{s_{\mu-\frac{1}{2},\frac{1}{2}}(r)}\right)-1=0,
	\end{split}
	\end{equation}
	\begin{equation}\label{lem convex Lommel eqn gs}
	\begin{split}
	&\left(\frac{r^2s''_{\mu-\frac{1}{2},\frac{1}{2}}(r)+(1-2\mu)rs'_{\mu-\frac{1}{2},\frac{1}{2}}(r)+\left(\mu^2-\frac{1}{4}\right)s_{\mu-\frac{1}{2},\frac{1}{2}}(r)}{rs'_{\mu-\frac{1}{2},\frac{1}{2}}(r)+\left(-\mu+\frac{1}{2}\right)s_{\mu-\frac{1}{2},\frac{1}{2}}(r)}\right)^2  \\
	&\quad{}\quad{}\quad{}\quad{}-2\left(\frac{r^2s''_{\mu-\frac{1}{2},\frac{1}{2}}(r)+(1-2\mu)rs'_{\mu-\frac{1}{2},\frac{1}{2}}(r)+\left(\mu^2-\frac{1}{4}\right)s_{\mu-\frac{1}{2},\frac{1}{2}}(r)}{rs'_{\mu-\frac{1}{2},\frac{1}{2}}(r)+\left(-\mu+\frac{1}{2}\right)s_{\mu-\frac{1}{2},\frac{1}{2}}(r)}\right)-1=0
	\end{split}\end{equation}
	and
	\begin{equation}\label{lem convex Lommel eqn hs}
	\begin{split}
	&\left(\frac{rs''_{\mu-\frac{1}{2},\frac{1}{2}}(\sqrt{r})+2(1-\mu)\sqrt{r}s'_{\mu-\frac{1}{2},\frac{1}{2}}(\sqrt{r})+\frac{(2\mu-3)(2\mu+1)}{4}s_{\mu-\frac{1}{2},\frac{1}{2}}(\sqrt{r})}{2\left(\left(\frac{3}{2}-\mu\right)s_{\mu-\frac{1}{2},\frac{1}{2}}(\sqrt{r})+\sqrt{r}s'_{\mu-\frac{1}{2},\frac{1}{2}}(\sqrt{r})\right)}\right)^2\\
	&\quad{}\quad{}\quad{}-2\left(\frac{rs''_{\mu-\frac{1}{2},\frac{1}{2}}(\sqrt{r})+2(1-\mu)\sqrt{r}s'_{\mu-\frac{1}{2},\frac{1}{2}}(\sqrt{r})+\frac{(2\mu-3)(2\mu+1)}{4}s_{\mu-\frac{1}{2},\frac{1}{2}}(\sqrt{r})}{2\left(\left(\frac{3}{2}-\mu\right)s_{\mu-\frac{1}{2},\frac{1}{2}}(\sqrt{r})+\sqrt{r}s'_{\mu-\frac{1}{2},\frac{1}{2}}(\sqrt{r})\right)}\right)-1=0.
	\end{split}
	\end{equation}
	Furthermore, the radii $r^c_\mathcal{L}(f_\mu)$, $r^c_\mathcal{L}(g_\mu)$ and $r^c_\mathcal{L}(h_\mu)$ are unique roots of the equations \eqref{lem convex Lommel eqn fs}, \eqref{lem convex Lommel eqn gs} and \eqref{lem convex Lommel eqn hs} in $(0,\xi'_{\mu,1})$, $(0,\gamma_{\mu,1})$ and $(0,\delta_{\mu,1})$, respectively, where $\xi'_{\mu,1}$, $\xi_{\mu,1}$, $\gamma_{\mu,1}$ and $\delta_{\mu,1}$ are the first positive zeros of $s'_{\mu-\frac{1}{2},\frac{1}{2}}$, $s_{\mu-\frac{1}{2},\frac{1}{2}}$, $g'_\mu$ and $h'_\mu$, respectively.
\end{theorem}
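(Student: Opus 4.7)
The plan is to mirror the proof of Theorem~\ref{lem convex q Bessel}, substituting the Hadamard factorizations of the Lommel function and its derivative for those of the $q$-Bessel functions. Let $\xi_{\mu,n}$ and $\xi'_{\mu,n}$ denote the $n$th positive zeros of $s_{\mu-\frac{1}{2},\frac{1}{2}}$ and $s'_{\mu-\frac{1}{2},\frac{1}{2}}$, respectively. By Lemma~\ref{lemma Lommel} applied to $\varphi_0$,
\begin{equation*}
s_{\mu-\frac{1}{2},\frac{1}{2}}(z)=\frac{z^{\mu+1/2}}{\mu(\mu+1)}\prod_{n\ge1}\left(1-\frac{z^2}{\xi_{\mu,n}^2}\right),
\end{equation*}
and since $s'_{\mu-\frac{1}{2},\frac{1}{2}}$ is entire of the same order with real zeros interlacing those of $s_{\mu-\frac{1}{2},\frac{1}{2}}$ (as recorded in the Introduction), an analogous product in terms of $\xi'_{\mu,n}$ holds. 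Logarithmic differentiation produces
\begin{equation*}
\frac{zs'_{\mu-\frac{1}{2},\frac{1}{2}}(z)}{s_{\mu-\frac{1}{2},\frac{1}{2}}(z)}=\mu+\frac{1}{2}-\sum_{n\ge1}\frac{2z^2}{\xi_{\mu,n}^2-z^2},\quad 1+\frac{zs''_{\mu-\frac{1}{2},\frac{1}{2}}(z)}{s'_{\mu-\frac{1}{2},\frac{1}{2}}(z)}=\mu+\frac{1}{2}-\sum_{n\ge1}\frac{2z^2}{\xi'^2_{\mu,n}-z^2}.
\end{equation*}
Writing $g_\mu(z)=z\varphi_0(z)$ and $h_\mu(z)=z\varphi_0(\sqrt{z})$, one reads off the Hadamard factorizations $g'_\mu(z)=\prod_{n\ge1}(1-z^2/\gamma_{\mu,n}^2)$ and $h'_\mu(z)=\prod_{n\ge1}(1-z/\delta_{\mu,n}^2)$, which furnish Mittag--Leffler expansions for $1+zg''_\mu/g'_\mu$ and $1+zh''_\mu/h'_\mu$ paralleling \eqref{Bessel eqn dg2}.

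Chain-rule differentiation of $f_\mu=(\mu(\mu+1)s_{\mu-\frac{1}{2},\frac{1}{2}})^{1/(\mu+1/2)}$ then yields
\begin{equation*}
1+\frac{zf''_\mu(z)}{f'_\mu(z)}=1+\frac{zs''_{\mu-\frac{1}{2},\frac{1}{2}}(z)}{s'_{\mu-\frac{1}{2},\frac{1}{2}}(z)}+\lambda\frac{zs'_{\mu-\frac{1}{2},\frac{1}{2}}(z)}{s_{\mu-\frac{1}{2},\frac{1}{2}}(z)}=1-\sum_{n\ge1}\frac{2z^2}{\xi'^2_{\mu,n}-z^2}-\lambda\sum_{n\ge1}\frac{2z^2}{\xi_{\mu,n}^2-z^2},
\end{equation*}
where $\lambda=\frac{1}{\mu+1/2}-1$. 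If $\mu\in(-1/2,1/2]\setminus\{0\}$, then $\lambda\ge 0$ and the triangle inequality applied termwise to the factorization $(1+zf''_\mu/f'_\mu)^2-1=(zf''_\mu/f'_\mu)(2+zf''_\mu/f'_\mu)$ gives, for $|z|<\xi'_{\mu,1}$,
\begin{equation*}
\left|\left(1+\frac{zf''_\mu(z)}{f'_\mu(z)}\right)^2-1\right|\le\left(\frac{|z|f''_\mu(|z|)}{f'_\mu(|z|)}\right)^2-2\frac{|z|f''_\mu(|z|)}{f'_\mu(|z|)}.
\end{equation*}
If $\mu\in(1/2,1)$, where $\lambda<0$, the same bound is obtained termwise via \cite[Lemma~2.1]{MR3641791}, exactly as in the transition to $\nu>1$ in Theorem~\ref{lem convex q Bessel}. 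For $g_\mu$ and $h_\mu$ the analogous single-sum inequalities hold directly for $|z|<\gamma_{\mu,1}$ and $|z|<\delta_{\mu,1}$, respectively.

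It follows that $r^c_\mathcal{L}(p)$ for $p\in\{f_\mu,g_\mu,h_\mu\}$ coincides with the smallest positive root of $X^2-2X-1=0$ with $X=rp''(r)/p'(r)$; rewriting $X$ through the normalizations reproduces exactly \eqref{lem convex Lommel eqn fs}--\eqref{lem convex Lommel eqn hs}. Uniqueness in the respective intervals $(0,\xi'_{\mu,1})$, $(0,\gamma_{\mu,1})$, $(0,\delta_{\mu,1})$ will be secured by applying the intermediate value theorem to $u(r):=X(r)^2-2X(r)-1$: continuity is clear, $u(0^+)=-1$, and $u(r)\to+\infty$ as $r$ approaches the relevant zero of $p'$ (since the dominating term in the Mittag--Leffler sum blows up). Strict monotonicity of $u$ then rules out additional roots.

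The main obstacle lies in the monotonicity step for $f_\mu$ in the subrange $\mu\in(1/2,1)$, where the $\xi_{\mu,n}$- and $\xi'_{\mu,n}$-sums appearing in $rf''_\mu(r)/f'_\mu(r)$ enter with opposite signs. Following Theorem~\ref{lem convex q Bessel}, the argument proceeds on the sub-interval $(0,\sqrt{\xi'_{\mu,1}\xi_{\mu,1}})$ by exploiting the inequality $\xi_{\mu,n}^2(\xi'^2_{\mu,n}-r^2)^2<\xi'^2_{\mu,n}(\xi_{\mu,n}^2-r^2)^2$, a consequence of the interlacing $\xi'_{\mu,n}<\xi_{\mu,n}$, to force $u'(r)>0$. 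Since this sub-interval strictly contains $(0,\xi'_{\mu,1})$ and $u$ is already positive near $\xi'_{\mu,1}$, the unique root lies within the required range, completing the identification of $r^c_\mathcal{L}(f_\mu)$.
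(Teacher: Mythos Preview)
Your proposal is correct and follows essentially the same template as the paper: both transplant the argument of Theorem~\ref{lem convex q Bessel} to the Lommel setting via the Hadamard factorizations of $s_{\mu-\frac12,\frac12}$, $s'_{\mu-\frac12,\frac12}$, $g'_\mu$ and $h'_\mu$, reduce to the scalar equation $X^2-2X-1=0$ with $X=rp''(r)/p'(r)$, and conclude with an Intermediate Value Theorem plus monotonicity argument exploiting the interlacing $\xi'_{\mu,n}<\xi_{\mu,n}$.

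The one substantive deviation is your handling of the subrange $\mu\in(-1/2,0)$ for $f_\mu$. You treat it directly, noting that $\lambda=\frac{1}{\mu+1/2}-1>0$ there so the triangle inequality applies without the Deniz--Sz\'asz lemma. The paper instead performs a parameter shift: it works with $f_{\mu-1}$ for an auxiliary $\mu\in(1/2,1)$, replacing $\varphi_0$ by $\varphi_1$ (whose zeros and the interlacing of the derivative's zeros are known in that range from Lemma~\ref{lemma Lommel} and \cite{MR3596935}), derives the analogous inequality, and then substitutes $\mu\mapsto\mu+1$ back. Your shortcut is cleaner but tacitly assumes that the realness and interlacing of the zeros of $s_{\mu-\frac12,\frac12}$ and $s'_{\mu-\frac12,\frac12}$ persist for $\mu\in(-1/2,0)$; the paper only records these facts explicitly for $\mu\in(0,1)$ and uses the shift precisely to stay inside that established range. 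If you want to keep the direct route, you should cite the relevant result (e.g.\ from \cite{MR3503704,MR3596935}) asserting realness and interlacing for the extended parameter interval; otherwise the paper's shift device closes the gap without further input.
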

\begin{proof}
	To show that the lemniscate convex radius of normalization $f_\mu$ is the smallest root of equation \eqref{lem convex Lommel eqn fs}, we divide the proof into cases when $0<\mu<1$ and $-1/2<\mu<0$.
	
	Firstly, assume that $\mu\in(0,1)$. It is known that for $0<\mu<1$, the zeros of $\varphi_0$, $\xi_{\mu,n}$, are real and positive  satisfying $\xi_{\mu,1}<\xi_{\mu,2}<\cdots$. Let $\xi'_{\mu,n}$ be the $n^\text{th}$ positive zero of $s'_{\mu-\frac{1}{2},\frac{1}{2}}$. The zeros of $s'_{\mu-\frac{1}{2},\frac{1}{2}}$ are interlaced with those of $s_{\mu-\frac{1}{2},\frac{1}{2}}$ as $0<\xi'_{\mu,1}<\xi_{\mu,1}<\xi'_{\mu,2}<\xi_{\mu,2}<\cdots$. On the similar lines as in proof of Theorem \ref{lem convex q Bessel}, with $\mu+1/2$ in place of $\nu$ and zeros $\xi_{\nu,n}(q)$ and $\xi'_{\nu,n}(q)$ of $J^{(2)}_\nu(\cdot;q)$ and $dJ^{(2)}_\nu(z;q)/dz$ replaced with $\xi_{\mu,n}$ and $\xi'_{\mu,n}$ of $s_{\mu-\frac{1}{2},\frac{1}{2}}$ and $s'_{\mu-\frac{1}{2},\frac{1}{2}}$, respectively (taking into account the cases $\mu\in(0,1/2]$ and $\mu\in[1/2,1)$ for which $\mu$ satisfies $1/(\mu+1/2)-1\geq0$ and $0\leq1-1/(\mu+1/2)<1/3$, respectively), it is observed that the radius of lemniscate convexity of $f_\mu$ is the unique root of  \eqref{lem convex Lommel eqn fs} in $(0,\xi'_{\mu,1})$. Now let $\mu\in(-1/2,0)$. Suppose that $\mu\in(1/2,1)$ and mimic the same proof with $\mu-1$ in place of $\mu$ and substituting $\varphi_0$ by $\varphi_1$ with $\zeta_{\mu,n}$ and $\zeta'_{\mu,n}$ being $n^\text{th}$ positive zeros of $s_{\mu-\frac{1}{2},\frac{1}{2}}$ and $s'_{\mu-\frac{1}{2},\frac{1}{2}}$, respectively. Then the function $f_{\mu-1}$ satisfies the inequality
	\begin{align*}
	\left|\left(1+\frac{zf''_{\mu-1}(z)}{f'_{\mu-1}(z)}\right)^2-1\right|
	&\leq\left(\frac{|z|f''_{\mu-1}(|z|)}{f'_{\mu-1}(|z|)}\right)^2-2\left(\frac{|z|f''_{\mu-1}(|z|)}{f'_{\mu-1}(|z|)}\right).
	\end{align*}
	On replacing $\mu$ with $\mu+1$, we get that the radius of lemniscate convexity of $f_\mu$, for $\mu\in(-1/2,1)\setminus\{0\}$ is the unique root of \eqref{lem convex Lommel eqn fs} in $(0,\xi'_{\mu,1})$, where $\xi'_{\mu,1}$ is the first positive zero of $s'_{\mu-\frac{1}{2},\frac{1}{2}}$.
	
	Since $g_\mu$ is a normalization of $s_{\mu-\frac{1}{2},\frac{1}{2}}(z)$ given by \eqref{Lommel norm g}, clearly $ g'_\mu(z) =\mu(\mu+1)z^{-\mu-\frac{1}{2}} ((-\mu+1/2) s_{\mu-\frac{1}{2},\frac{1}{2}}(z) +zs'_{\mu-\frac{1}{2},\frac{1}{2}}(z))$. For $-1<\mu<1$, with  $\gamma_{\mu,n}$ being the $n^\text{th}$ positive zero of $g'_\mu$ satisfying $\gamma_{\mu,1}<\gamma_{\mu,2}<\cdots$, the Hadamard factorization of $g'_\mu(z)$ is given by
	\begin{equation*}\label{Lommel dg}
	g'_\mu(z)=\prod_{n\geq1}\left(1-\frac{z^2}{\gamma^2_{\mu,n}}\right).
	\end{equation*}	Using the method as in proving Theorem \ref{lem convex q Bessel}, the radius of lemniscate convexity of the function $g_\mu$ is the unique root of the equation \eqref{lem convex Lommel eqn gs} in $(0,\gamma_{\mu,1})$. Similarly, it can be determined for the function $h_\mu$ given that the Hadamard factorization of $h'_\mu$ is $h'_\mu(z) =\prod_{n\geq1} (1-z/\delta^2_{\mu,n})$, where $\delta_{\mu,n}$ is the $n^\text{th}$ positive zero of $h'_\mu$ such that $\delta_{\mu,1}<\delta_{\mu,2}<\cdots$. Thus, the radius of lemniscate convexity of the function $h_\mu$ is the unique positive root of the equation \eqref{lem convex Lommel eqn hs} in $(0,\delta_{\mu,1})$. Hence, the theorem holds.
\end{proof}

%>>>>>>>>>>>>>>>>>>>>>>>>>>>>>>>>>>>>>>>>>>>>>>>>>>>>>>>>>>>>>>>>>>>>>>>>>>>>>>>>>>>>>>>>>>>>>>>>>
The Rodrigues formula of Legendre polynomial, $P_n(z)=(d^n((z^2-1)^n)/dz^n)/2^n n!$, implies that the Legendre polynomial of odd degree has only real roots and the roots of $\mathcal{P}_{2n-1}$ are $0$, $\pm \alpha_0,\pm \alpha_1,\pm \alpha_2,\cdots,\pm \alpha_{n-1}$ such that $0<\alpha_1<\alpha_2<\cdots<\alpha_{n-1}$. The product representation of $\mathcal{P}_{2n-1}$ as in \cite{MR3905364} is $ \mathcal{P}_{2n-1}(z)=a_{2n-1}z\prod_{k=1}^{n-1}(z^2-\alpha_k^2)$, Thus it is clear that
\begin{equation}\label{legendre eqn p}
\frac{z\mathcal{P}'_{2n-1}(z)}{\mathcal{P}_{2n-1}(z)}=1-\sum_{k=1}^{n-1}\frac{2z^2}{\alpha_k^2-z^2}.
\end{equation} Using the same technique as in  proof of Theorem \ref{lem starlike q Bessel}, we have the following.
\begin{theorem}
	Let $\alpha_1$ denote the first positive zero of the normalized Legendre polynomial of odd degree, $\mathcal{P}_{2n-1}$. Then the lemniscate starlike radius of $\mathcal{P}_{2n-1}$;  $r^*_\mathcal{L}(\mathcal{P}_{2n-1})$, is the unique positive root of the equation \[ \left(\frac{r\mathcal{P}'_{2n-1}(r)}{\mathcal{P}_{2n-1}(r)}\right)^2-4\left(\frac{r\mathcal{P}'_{2n-1}(r)}{\mathcal{P}_{2n-1}(r)}\right)+2=0  \]  in $(0,\alpha_1)$.
\end{theorem}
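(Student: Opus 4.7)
The plan is to adapt the argument used in Theorem \ref{lem starlike q Bessel} to the finite product setting provided by \eqref{legendre eqn p}. Set
\[ \Phi(z):=\frac{z\mathcal{P}'_{2n-1}(z)}{\mathcal{P}_{2n-1}(z)}=1-\sum_{k=1}^{n-1}\frac{2z^2}{\alpha_k^2-z^2}, \]
which is analytic for $|z|<\alpha_1$ since $\alpha_1$ is the smallest positive zero of $\mathcal{P}_{2n-1}$. First I would compute
\[ \Phi(z)^2-1=(\Phi(z)-1)(\Phi(z)+1)=\Bigl(\sum_{k=1}^{n-1}\frac{2z^2}{\alpha_k^2-z^2}\Bigr)\Bigl(\sum_{k=1}^{n-1}\frac{2z^2}{\alpha_k^2-z^2}-2\Bigr), \]
apply the triangle inequality together with the elementary estimate $|z^2/(\alpha_k^2-z^2)|\leq |z|^2/(\alpha_k^2-|z|^2)$ valid for $|z|<\alpha_k$, and observe that each partial sum is a positive real number. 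This produces the bound
\[ |\Phi(z)^2-1|\leq \Bigl(\sum_{k=1}^{n-1}\frac{2|z|^2}{\alpha_k^2-|z|^2}\Bigr)\Bigl(\sum_{k=1}^{n-1}\frac{2|z|^2}{\alpha_k^2-|z|^2}+2\Bigr). \]
Substituting $z$ by $|z|$ in \eqref{legendre eqn p} identifies the first factor with $1-\Phi(|z|)$ and the second with $3-\Phi(|z|)$, so the bound collapses to $\Phi(|z|)^2-4\Phi(|z|)+3$.

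The lemniscate starlikeness condition $|\Phi(z)^2-1|<1$ is therefore implied by $\Phi(|z|)^2-4\Phi(|z|)+2<0$, so $\mathcal{P}_{2n-1}$ is lemniscate starlike on the disk $|z|<r^*$, where $r^*$ is the smallest positive root of
\[ v(r):=\Phi(r)^2-4\Phi(r)+2=0. \]
This matches the displayed equation in the statement.

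To promote this to an equality and to establish uniqueness, I would define $v\colon (0,\alpha_1)\to\mathbb{R}$ as above and note that $v$ is continuous with $v(0^+)=1-4+2=-1<0$ and $v(r)\to+\infty$ as $r\nearrow\alpha_1$ since $\Phi(r)\to-\infty$. By the Intermediate Value Theorem a root exists in $(0,\alpha_1)$. For uniqueness, I would differentiate $\Phi$ termwise to get $\Phi'(r)=-\sum_{k=1}^{n-1}4r\alpha_k^2/(\alpha_k^2-r^2)^2<0$ on $(0,\alpha_1)$, so $\Phi$ is strictly decreasing there with $\Phi(r)<1<2$; hence $v'(r)=2(\Phi(r)-2)\Phi'(r)>0$, giving strict monotonicity of $v$ and thus a unique zero in $(0,\alpha_1)$. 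I expect no serious obstacle: the only mildly delicate point is verifying that the triangle inequality applied to the Mittag-Leffler sum is sharp precisely when $z$ is positive real, which is exactly what turns the bound into an equality at $z=|z|$ and drives the equivalence between the extremal inequality and the scalar equation.
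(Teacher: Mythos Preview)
Your proposal is correct and follows exactly the approach the paper indicates: the paper does not give a separate proof for this theorem but simply states that it follows ``using the same technique as in proof of Theorem~\ref{lem starlike q Bessel},'' and your argument is precisely that technique specialized to the finite Mittag--Leffler sum \eqref{legendre eqn p}, including the monotonicity computation for uniqueness that parallels the one for $u_\nu(\cdot;q)$. Your write-up is in fact more detailed than what the paper provides; the only caveat is that your closing remark about sharpness of the triangle inequality at positive real $z$ is slightly optimistic (the step $|\sum-2|\le|\sum|+2$ is strict there), but the paper's own proofs operate at the same level of rigor on this point.
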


%>>>>>>>>>>>>>>>>>>>>>>>>>>>>>>>>>>>>>>>>>>>>>>>>>>>>>>>>>>>>>>>>>>>>>>>>>>>>>>>>>>>>>>>>>>>>>>>>>>
The lemniscate convex radius for the function $\mathcal{P}_{2n-1}$ is given by:
\begin{theorem}\label{lem convex legendre p}
	Let $\alpha_1$ be the first positive zero of the normalized Legendre polynomial of odd degree $\mathcal{P}_{2n-1}$.The lemniscate convex radius of the function $\mathcal{P}_{2n-1}$;  $r^c_\mathcal{L}(\mathcal{P}_{2n-1})$, is the smallest positive root of the equation
	\begin{equation}\label{lem convex legendre eqn p} \left(\frac{r\mathcal{P}''_{2n-1}(r)}{\mathcal{P}'_{2n-1}(r)}\right)^2-2\left(\frac{r\mathcal{P}''_{2n-1}(r)}{\mathcal{P}'_{2n-1}(r)}\right)-1=0
	\end{equation} in $(0,\alpha_1)$.
\end{theorem}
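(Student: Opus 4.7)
The plan is to transcribe the strategy used for the lemniscate convex radius of $g_\nu^{(s)}$ in Theorem \ref{lem convex q Bessel}, replacing the Bessel setup with the Legendre one. The essential ingredient will be a Mittag-Leffler expansion for $z\mathcal{P}''_{2n-1}(z)/\mathcal{P}'_{2n-1}(z)$ analogous to \eqref{Bessel eqn dg2}.

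To produce it, I would first locate the zeros of $\mathcal{P}'_{2n-1}$. Since $\mathcal{P}_{2n-1}$ has $2n-1$ simple real zeros $0,\pm\alpha_1,\ldots,\pm\alpha_{n-1}$, Rolle's theorem together with a degree count forces $\mathcal{P}'_{2n-1}$ (which is even of degree $2n-2$) to have $2n-2$ simple real zeros interlaced with those of $\mathcal{P}_{2n-1}$; writing the positive ones as $\beta_1<\cdots<\beta_{n-1}$, we get $0<\beta_1<\alpha_1<\beta_2<\alpha_2<\cdots$. Combined with $\mathcal{P}'_{2n-1}(0)=1$, this yields the factorization $\mathcal{P}'_{2n-1}(z)=\prod_{k=1}^{n-1}(1-z^2/\beta_k^2)$, and logarithmic differentiation produces
\begin{equation*}
1+\frac{z\mathcal{P}''_{2n-1}(z)}{\mathcal{P}'_{2n-1}(z)}=1-\sum_{k=1}^{n-1}\frac{2z^2}{\beta_k^2-z^2}.
\end{equation*}

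With this expansion in hand, applying the triangle inequality to the identity $(1+w)^2-1=w(w+2)$ with $w=z\mathcal{P}''_{2n-1}(z)/\mathcal{P}'_{2n-1}(z)$ gives, for $|z|<\beta_1$,
\begin{equation*}
\left|\left(1+\frac{z\mathcal{P}''_{2n-1}(z)}{\mathcal{P}'_{2n-1}(z)}\right)^2-1\right|\leq \left(\frac{|z|\mathcal{P}''_{2n-1}(|z|)}{\mathcal{P}'_{2n-1}(|z|)}\right)^2-2\,\frac{|z|\mathcal{P}''_{2n-1}(|z|)}{\mathcal{P}'_{2n-1}(|z|)},
\end{equation*}
exactly as in \eqref{lem convex Bessel eqn f2}. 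Consequently, $\mathcal{P}_{2n-1}$ is lemniscate convex on $|z|<r_0$, where $r_0$ is the smallest positive root of \eqref{lem convex legendre eqn p}.

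For existence and uniqueness of this root in $(0,\beta_1)\subseteq(0,\alpha_1)$, set $u(r)=S(r)^2+2S(r)-1$, where $S(r)=\sum_{k=1}^{n-1}2r^2/(\beta_k^2-r^2)=-r\mathcal{P}''_{2n-1}(r)/\mathcal{P}'_{2n-1}(r)$. Then $u$ is continuous and strictly increasing on $(0,\beta_1)$ (each summand in $S(r)$ is increasing in $r$) with $u(0^+)=-1<0$ and $u(\beta_1^-)=+\infty$, so the Intermediate Value Theorem gives a unique zero there. The main obstacle is really just the factorization of $\mathcal{P}'_{2n-1}$, which depends on the interlacing of its zeros with those of $\mathcal{P}_{2n-1}$ supplied by Rolle's theorem; once available, the rest of the argument is an essentially mechanical transcription of Theorem \ref{lem convex q Bessel}.
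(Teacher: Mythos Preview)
Your argument is correct, but it is not the route the paper takes. You factor $\mathcal{P}'_{2n-1}$ directly, locating its zeros $\beta_1<\cdots<\beta_{n-1}$ via Rolle's theorem and the interlacing $0<\beta_1<\alpha_1<\beta_2<\cdots$, and then obtain the clean Mittag--Leffler expansion $1+z\mathcal{P}''_{2n-1}(z)/\mathcal{P}'_{2n-1}(z)=1-\sum_{k}2z^2/(\beta_k^2-z^2)$, after which the proof is an exact copy of the $g_\nu^{(s)}$ case in Theorem~\ref{lem convex q Bessel}. The paper instead never introduces the zeros of $\mathcal{P}'_{2n-1}$: it differentiates the known expansion \eqref{legendre eqn p} for $z\mathcal{P}'_{2n-1}/\mathcal{P}_{2n-1}$ to obtain a considerably messier formula for $1+z\mathcal{P}''_{2n-1}/\mathcal{P}'_{2n-1}$ purely in terms of the $\alpha_k$ (see \eqref{legendre expr p}), and then applies the triangle inequality to that expression. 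Your approach costs the extra Rolle/interlacing step but pays it back with a simpler estimate and, as a bonus, yields strict monotonicity of $u$ and hence \emph{uniqueness} of the root in $(0,\beta_1)\subset(0,\alpha_1)$, which the paper does not claim for this theorem. The paper's approach avoids any discussion of the zeros of $\mathcal{P}'_{2n-1}$ at the price of a more intricate bound.
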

\begin{proof}
	For the normalized Legendre polynomial $\mathcal{P}_{2n-1}$, by means of equation \eqref{legendre eqn p}, it is clear to see that the equation
	\begin{equation}\label{legendre expr p}
	1+\frac{z\mathcal{P}''_{2n-1}(z)}{\mathcal{P}'_{2n-1}(z)}=1-\sum\limits_{k=1}^{n-1}\frac{2z^2}{z_k^2-z^2}-\frac{\sum_{k=1}^{n-1}\frac{4z^2z_k^2}{(z_k^2-z^2)^2}}{1-\sum_{k=1}^{n-1}\frac{2z^2}{z_k^2-z^2}}
	\end{equation} holds. For $|z|<\alpha_1$, using triangle's inequality in equation \eqref{legendre expr p}, we notice that the function $1+z\mathcal{P}''_{2n-1}(z)/\mathcal{P}'_{2n-1}(z)$ satisfies
	\begin{align*} \left|\left(1+\frac{z\mathcal{P}''_{2n-1}(z)}{\mathcal{P}'_{2n-1}(z)}\right)^2-1\right| &\leq \left(\sum_{k=1}^{n-1}\frac{2|z|^2}{z_k^2-|z|^2}+\frac{\sum_{k=1}^{n-1}\frac{4|z|^2z_k^2}{(z_k^2-|z|^2)^2}}{1-\sum_{k=1}^{n-1}\frac{2|z|^2}{z_k^2-|z|^2}} \right)^2\\ &\quad{}+2\left(\sum_{k=1}^{n-1}\frac{2|z|^2}{z_k^2-|z|^2}+\frac{\sum_{k=1}^{n-1}\frac{4|z|^2z_k^2}{(z_k^2-|z|^2)^2}}{1-\sum_{k=1}^{n-1}\frac{2|z|^2}{z_k^2-|z|^2}}\right).	\end{align*}	If $z$ is replaced with $|z|$ in equation \eqref{legendre expr p}, the above inequality results into
	\begin{equation}\label{lem convex legendre eqn mod p}
	\left|\left(1+\frac{z\mathcal{P}''_{2n-1}(z)}{\mathcal{P}'_{2n-1}(z)}\right)^2-1\right| \leq \left(\frac{|z|\mathcal{P}''_{2n-1}(|z|)}{\mathcal{P}'_{2n-1}(|z|)}\right)^2-2\left(\frac{|z|\mathcal{P}''_{2n-1}(|z|)}{\mathcal{P}'_{2n-1}(|z|)}\right).
	\end{equation}
	Let $r_1$ is the smallest positive root of the equation \eqref{lem convex legendre eqn p}, then the inequality \eqref{lem convex legendre eqn mod p} implies that for $|z|<r_1$, the Legendre polynomial $\mathcal{P}_{2n-1}$ satisfies $ |(1+z\mathcal{P}''_{2n-1}(z) /\mathcal{P}'_{2n-1}(z))^2-1|<1. $ Moreover, if the function $u_n\colon(0,\alpha_1)\to\mathbb{R}$ is defined by \[ u_n(r):=\left(\frac{r\mathcal{P}''_{2n-1}(r)}{\mathcal{P}'_{2n-1}(r)}\right)^2-2\left(\frac{r\mathcal{P}''_{2n-1}(r)}{\mathcal{P}'_{2n-1}(r)}\right)-1, \] then $u_n$ is continuous on $(0,\alpha_1)$ and $\lim_{r\searrow0}u_n(r)=-1<0$ and $\lim_{r\nearrow \alpha_1}u_n(r)=\infty>0$. Using the Intermediate Value Theorem, there exists a positive zero of $u_n$ in $(0,\alpha_1)$. Hence, the smallest positive root of $u_n(r)=0$ is the lemniscate convex radius of $\mathcal{P}_{2n-1}$, $r^c_\mathcal{L}(\mathcal{P}_{2n-1})$.
\end{proof}

\section{Janowski Starlikeness and Janowski Convexity}\label{janowski}
In this section, using the Hadamard factorizations of the specified functions, we determine the radii for which normalizations of these functions are Janowski starlike and Janowski convex.

The following result deals with the radii of Janowski starlikeness of the normalizations $f^{(s)}_\nu(z;q)$, $g^{(s)}_\nu(z;q)$ and $h^{(s)}_\nu(z;q)$ of the $q$-Bessel functions $J^{(s)}_\nu(z;q)$ for $s\in\{2,3\}$.
\begin{theorem}\label{jan star Bessel}
	For $s\in\{2,3\}$, let $\xi_{\nu,1}(q)$ and $\zeta_{\nu,1}(q)$ denote the first positive zeros of the $q$-Bessel functions $J^{(2)}_\nu(\cdot;q)$ and $J^{(3)}_\nu(\cdot;q)$, respectively where $0<q<1$. Suppose $\nu>-1$. Then the Janowski starlike radii of $f^{(s)}_\nu(\cdot;q)$ (with $\nu>0$), $g^{(s)}_\nu(\cdot;q)$ and $h^{(s)}_\nu(\cdot;q)$; $r^*_{A,B}(f^{(s)}_\nu)$, $r^*_{A,B}(g^{(s)}_\nu)$ and $r^*_{A,B}(h^{(s)}_\nu)$ are the smallest positive root of the equations (respectively)
	\begin{align}
	&\frac{rdJ^{(s)}_\nu(r;q)/dr}{J^{(s)}_\nu(r;q)}-\nu+\nu\left(\frac{A-B}{1+|B|}\right)=0,\label{jan star Bessel eqn fs}\\
	&\frac{rdJ^{(s)}_\nu(r;q)/dr}{J^{(s)}_\nu(r;q)}-\nu+\frac{A-B}{1+|B|}=0\label{jan star Bessel eqn gs}
	\intertext{and}
	&\frac{\sqrt{r}dJ^{(s)}_\nu(\sqrt{r};q)/dr}{J^{(s)}_\nu(\sqrt{r};q)}-\nu+2\frac{A-B}{1+|B|}=0.\label{jan star Bessel eqn hs}
	\end{align}
	Furthermore, for $s=2,3$, $r^*_{A,B}(f^{(s)}_\nu)$, $r^*_{A,B}(g^{(s)}_\nu)$ and $r^*_{A,B}(h^{(s)}_\nu)$ are unique positive roots of equations \eqref{jan star Bessel eqn fs}, \eqref{jan star Bessel eqn gs} and \eqref{jan star Bessel eqn hs}, respectively in $(0,t)$ such that $t=\xi_{\nu,1}(q)$ for $s=2$ and $t=\zeta_{\nu,1}(q)$ for $s=3$.
\end{theorem}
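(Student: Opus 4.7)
The plan is to mirror the approach of Theorem \ref{lem starlike q Bessel}, replacing the lemniscate inequality by one tailored to the Janowski condition, and to present only the $s=2$ case since the $s=3$ case follows by substituting the zeros $\zeta_{\nu,n}(q)$ of $J_\nu^{(3)}$ for the zeros $\xi_{\nu,n}(q)$ of $J_\nu^{(2)}$. The starting point is the Mittag--Leffler expansions \eqref{Bessel expr f2}, \eqref{Bessel expr g2}, \eqref{Bessel expr h2} already derived in that proof, each of which writes $zp'(z;q)/p(z;q)-1$ (with $p$ standing collectively for $f_\nu(\cdot;q)$, $g_\nu(\cdot;q)$, or $h_\nu(\cdot;q)$) as a series of the form $-\sum_{n\geq 1}\lambda_n z^2/(\xi^2_{\nu,n}(q)-z^2)$, or an analogous series linear in $z$ for $h_\nu$, with positive coefficients $\lambda_n$.

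Applying the triangle inequality termwise to these series yields, for $|z|<\xi_{\nu,1}(q)$, the basic bound
\[
\left|\frac{zp'(z;q)}{p(z;q)}-1\right|\leq 1-\frac{|z|p'(|z|;q)}{p(|z|;q)}.
\]
The central analytic ingredient is then the following algebraic fact, which I regard as the main obstacle because it requires careful handling of the sign of $B$: writing $A-Bw=(A-B)-B(w-1)$, one obtains
\[
\left|\frac{w-1}{A-Bw}\right|\leq\frac{|w-1|}{(A-B)-|B||w-1|}
\]
whenever the denominator is positive, and the right-hand side is strictly less than $1$ precisely when $|w-1|<(A-B)/(1+|B|)$. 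Applying this with $w=zp'(z;q)/p(z;q)$ and the triangle bound above, $p$ lies in $\mathcal{S}^*[A,B]$ on the disk $|z|<r$ as soon as
\[
1-\frac{rp'(r;q)}{p(r;q)}<\frac{A-B}{1+|B|},
\]
so the Janowski starlike radius is the smallest positive $r$ at which equality holds.

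To translate this into the equations of the theorem, I would substitute the explicit expressions \eqref{Bessel expr f2}--\eqref{Bessel expr h2} for $rp'(r;q)/p(r;q)$ in terms of $J_\nu^{(2)}$ and its derivative; direct algebraic rearrangement then produces \eqref{jan star Bessel eqn fs}, \eqref{jan star Bessel eqn gs}, \eqref{jan star Bessel eqn hs} respectively (with $s=2$). For uniqueness, the function $r\mapsto 1-rp'(r;q)/p(r;q)$ is continuous and strictly increasing on $(0,\xi_{\nu,1}(q))$ from $0$ to $+\infty$, since each summand in the Mittag--Leffler series is strictly increasing in $r$ on this interval; combined with $(A-B)/(1+|B|)>0$, the intermediate value theorem forces a unique root in the stated interval. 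The $s=3$ case is obtained verbatim with $\zeta_{\nu,n}(q)$ in place of $\xi_{\nu,n}(q)$, completing the proof.
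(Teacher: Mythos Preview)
Your proposal is correct and follows essentially the same route as the paper: both use the Mittag--Leffler expansions \eqref{Bessel expr f2}--\eqref{Bessel expr h2}, bound the Janowski quotient via $|A-Bw|\ge (A-B)-|B||w-1|$ together with the termwise triangle inequality, reduce to the real-variable equation $1-rp'(r;q)/p(r;q)=(A-B)/(1+|B|)$, and finish with monotonicity plus the Intermediate Value Theorem. The only difference is cosmetic---you isolate the algebraic bound on $|(w-1)/(A-Bw)|$ as a separate lemma, whereas the paper performs the numerator and denominator estimates in a single displayed inequality---so the arguments are substantively identical.
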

\begin{proof}
	The proof given here is for the case $s=2$ again and the other one follows on the similar lines. To determine the radius of Janowski starlikeness of the normalization $f_\nu(\cdot;q)$ of $J_\nu(\cdot;q)$ given by \eqref{Bessel norm f2}, we need to find a real positive number $r^*$ such that $|((zf'_\nu(z;q)/f_\nu(z;q))-1)/(A-Bzf'_\nu(z;q)/f_\nu(z;q))|<1$ for $|z|<r^*$. For $|z|<\xi_{\nu,1}(q)$, by equation \eqref{Bessel expr f2}, using triangle's inequality, it follows that the inequality \[ \left|\frac{\dfrac{zf'_\nu(z;q)}{f_\nu(z;q)}-1}{A-B\dfrac{zf'_\nu(z;q)}{f_\nu(z;q)}}\right| \leq \frac{\dfrac{1}{\nu} \sum\limits_{n\geq1} \dfrac{2|z|^2}{\xi^2_{\nu,n}(q)-|z|^2}}{A-B-|B|\dfrac{1}{\nu} \sum\limits_{n\geq1} \dfrac{2|z|^2}{\xi^2_{\nu,n}(q)-|z|^2}}  \] holds for $\nu>0$ with equality at $z=|z|=r$. By substituting $|z|$ in place of $z$ using equation \eqref{Bessel expr f2}, the above inequality yields
	\begin{equation}\label{jan star Bessel eqn mf2}
	\left|\frac{\dfrac{zf'_\nu(z;q)}{f_\nu(z;q)}-1}{A-B\dfrac{zf'_\nu(z;q)}{f_\nu(z;q)}}\right| \leq \frac{1-\dfrac{|z|f'_\nu(|z|;q)}{f_\nu(|z|;q)}}{A-B+|B|\left(\dfrac{|z|f'_\nu(|z|;q)}{f_\nu(|z|;q)}-1\right)}.
	\end{equation}
	If $r^*$ is the smallest positive root of the equation
	\begin{equation}\label{jan star Bessel eqn f2}
	\frac{rf'_\nu(r;q)}{f_\nu(r;q)}=1-\frac{A-B}{1+|B|},
	\end{equation} then the inequality \eqref{jan star Bessel eqn mf2} implies that the function $f_\nu$ is Janowski starlike for $|z|<r^*$.
	
	Further, the function $u_\nu(\cdot;q)\colon(0,\xi_{\nu,1}(q))\to\mathbb{R}$ defined by $ u_\nu(r;q):= rf'_\nu(r;q)/f_\nu(r;q)-1+(A-B)/(1+|B|)$ is continuous on $(0,\xi_{\nu,1}(q))$ and is strictly decreasing as \[ u'_\nu(r;q) =\dfrac{-1}{\nu} \sum_{n\geq1} \dfrac{4r\xi^2_{\nu,n}(q)}{(\xi^2_{\nu,n}(q)-r^2)^2} <0.\] Also, $\lim_{r\searrow0} u_\nu(r;q)=(A-B)/(1+|B|)>0$ and $\lim_{r\nearrow \xi_{\nu,1}(q)}u_\nu(r;q)=-\infty<0$. Therefore, the Intermediate Value Theorem ensures the existence of the unique root of $u_\nu(r;q)=0$ in $(0,\xi_{\nu,1}(q))$. Thus, the Janowski starlike radius of $f_\nu(\cdot;q)$; $r^*_{A,B}(f_\nu)$, is the unique root of equation \eqref{jan star Bessel eqn f2} in $(0,\xi_{\nu,1}(q))$ which is equivalent to \eqref{jan star Bessel eqn fs} with $s=2$.
	
	Similar equations hold for the normalizations $g_\nu$ and $h_\nu$ given by \eqref{Bessel norm g2} and \eqref{Bessel norm  h2}, respectively for $\nu>-1$. Hence, the radii of Janowski starlikeness for $g_\nu(\cdot;q)$ and $h_\nu(\cdot;q)$; $r^*_{A,B}(g_\nu)$ and $r^*_{A,B}(h_\nu)$, are the  unique roots of the equations \eqref{jan star Bessel eqn gs} and \eqref{jan star Bessel eqn hs}, respectively with $s=2$. Therefore, the theorem holds.
\end{proof}

%>>>>>>>>>>>>>>>>>>>>>>>>>>>>>>>>>>>>>>>>>>>>>>>>>>>>>>>>>>>>>>>>>>>>>>>>>>>>>>>>>>>>>>>>>>>>>>>>>>>
In the following result, we determine the radius of Janowski convexity of the normalized $q$-Bessel functions $f^{(s)}_\nu(\cdot;q)$, $g^{(s)}_\nu(\cdot;q)$ and $h^{(s)}_\nu(\cdot;q)$.
\begin{theorem}\label{jan convex Bessel}
	For the $q$-Bessel functions $J^{(2)}_\nu(\cdot;q)$ and $J^{(3)}_\nu(\cdot;q)$ with $0<q<1$, let $\xi'_{\nu,1}(q)$ and $\zeta'_{\nu,1}(q)$ be the first positive zeros of $dJ^{(s)}_\nu(z;q)/dz$ for $s=2,3$, respectively and $\alpha_{\nu,1}(q)$ and $\gamma_{\nu,1}(q)$ be the first positive zeros of $z\cdot dJ^{(s)}_\nu(z;q)/dz+(1-\nu)J^{(s)}_\nu(z;q)$ for $s=2$ and $s=3$, respectively. Similarly, let $\beta_{\nu,1}(q)$ and $\delta_{\nu,1}(q)$ denote the first positive zeros of $z\cdot dJ^{(s)}_\nu(z;q)/dz+(2-\nu)J^{(s)}_\nu(z;q)$ with $s=2$ and $s=3$, respectively. For $s\in\{2,3\}$, if $\nu>0$, then the function $f_\nu^{(s)}(\cdot;q)$ and if $\nu>-1$, then the functions $g^{(s)}_\nu(\cdot;q)$ and $h^{(s)}_\nu(\cdot;q)$, radii of Janowski convexity of these functions, $r^c_{A,B}(f^{(s)}_\nu)$, $r^c_{A,B}(g^{(s)}_\nu)$ and $r^c_{A,B}(h^{(s)}_\nu)$,  are the smallest positive root of the equations (respectively)
	\begin{align}
	&\frac{rd^2J^{(s)}_\nu(r;q)/dr^2}{dJ^{(s)}_\nu(r;q)/dr}+\left(\frac{1}{\nu}-1\right)\frac{rdJ^{(s)}_\nu(r;q)/dr}{J^{(s)}_\nu(r;q)}+\frac{A-B}{1+|B|}=0,\label{jan convex Bessel eqn fs}\\
	&\frac{\nu(\nu-1)J^{(s)}_\nu(r;q)+2(1-\nu)rdJ^{(s)}_\nu(r;q)/dr+r^2d^2J^{(s)}_\nu(r;q)/dr^2}{(1-\nu)J^{(s)}_\nu(r;q)+rdJ^{(s)}_\nu(r;q)/dr}+\frac{A-B}{1+|B|}=0\label{jan convex Bessel eqn gs}
	\end{align}
	and
	\begin{equation}\label{jan convex Bessel eqn hs}
	\frac{\nu(\nu-2)J^{(s)}_\nu(\sqrt{r};q)+(3-2\nu)\sqrt{r}dJ^{(s)}_\nu(\sqrt{r};q)/dr+rd^2J^{(s)}_\nu(\sqrt{r};q)/dr^2}{2((2-\nu)J^{(s)}_\nu(\sqrt{r};q)+\sqrt{r}dJ^{(s)}_\nu(\sqrt{r};q)/dr)}+\frac{A-B}{1+|B|}=0.
	\end{equation}
	In addition to that, for $s=2,3$, $r^c_{A,B}(f^{(s)}_\nu)$, $r^c_{A,B}(g^{(s)}_\nu)$ and $r^c_{A,B}(h^{(s)}_\nu)$ are the unique positive roots of equations \eqref{jan convex Bessel eqn fs}, \eqref{jan convex Bessel eqn gs} and \eqref{jan convex Bessel eqn hs} in $(0,t_1)$, $(0,t_2)$ and $(0,t_3)$, respectively such that $t_1=\xi'_{\nu,1}(q)$, $t_2=\alpha_{\nu,1}(q)$ and $t_3=\beta_{\nu,1}(q)$ for $s=2$ and $t_1=\zeta'_{\nu,1}(q)$, $t_2=\gamma_{\nu,1}(q)$ and $t_3=\delta_{\nu,1}(q)$ for $s=3$.
\end{theorem}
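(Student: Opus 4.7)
The plan is to mirror the combined technique of Theorems \ref{lem convex q Bessel} and \ref{jan star Bessel}. I present the argument for $s=2$ only; the case $s=3$ follows by exactly the same calculation with $\zeta$'s in place of $\xi$'s. Abbreviate $J_\nu:=J_\nu^{(2)}$, $f_\nu:=f_\nu^{(2)}$, etc., and recall the Mittag-Leffler expansion \eqref{Bessel expr df2}. For $|z|=r<\xi'_{\nu,1}(q)$, termwise application of the triangle inequality gives, when $\nu\in(0,1]$ so that $\lambda:=1/\nu-1\geq 0$,
\[
\left|\frac{zf''_\nu(z;q)}{f'_\nu(z;q)}\right|\leq\sum_{n\geq1}\frac{2r^2}{\xi'^2_{\nu,n}(q)-r^2}+\lambda\sum_{n\geq1}\frac{2r^2}{\xi^2_{\nu,n}(q)-r^2}=-\frac{rf''_\nu(r;q)}{f'_\nu(r;q)}.
\]
For $\nu>1$ the same majorization follows from the two-sum inequality \eqref{relation} of \cite[Lemma 2.1]{MR3641791}, exactly as in the proof of Theorem \ref{lem convex q Bessel}.

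Next, I would translate the Janowski convex inequality into a real one. By the triangle inequality, $|A-B(1+w)|\geq(A-B)-|B||w|$, whence
\[
\left|\frac{zf''_\nu(z;q)/f'_\nu(z;q)}{A-B(1+zf''_\nu(z;q)/f'_\nu(z;q))}\right|\leq\frac{-rf''_\nu(r;q)/f'_\nu(r;q)}{(A-B)+|B|\cdot rf''_\nu(r;q)/f'_\nu(r;q)},
\]
the denominator being positive in the relevant range. A short algebraic rearrangement shows that the right side is strictly below $1$ precisely when $rf''_\nu(r;q)/f'_\nu(r;q)+(A-B)/(1+|B|)>0$. Substituting the identity $zf''_\nu/f'_\nu=zJ''_\nu/J'_\nu+(1/\nu-1)zJ'_\nu/J_\nu$ from normalization \eqref{Bessel norm f2} then turns the boundary equation into \eqref{jan convex Bessel eqn fs} with $s=2$.

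For existence and uniqueness I would examine $u_\nu(r;q):=rf''_\nu(r;q)/f'_\nu(r;q)+(A-B)/(1+|B|)$ on $(0,\xi'_{\nu,1}(q))$. Since $f_\nu$ is analytic at the origin with $f'_\nu(0)=1$, one has $\lim_{r\searrow0}u_\nu(r;q)=(A-B)/(1+|B|)>0$; since $f'_\nu$ has a simple zero at $\xi'_{\nu,1}(q)$ (via its relation to $J'_\nu$) and $f''_\nu$ is finite and negative there, $\lim_{r\nearrow\xi'_{\nu,1}(q)}u_\nu(r;q)=-\infty$. Strict monotonicity of $u_\nu$ follows from termwise differentiation of \eqref{Bessel expr df2}, handling the case $\nu>1$ through the interlacing $\xi'_{\nu,n}(q)<\xi_{\nu,n}(q)$ precisely as in Theorem \ref{lem convex q Bessel}. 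The Intermediate Value Theorem then identifies $r^c_{A,B}(f^{(2)}_\nu)$ as the unique root of \eqref{jan convex Bessel eqn fs} in $(0,\xi'_{\nu,1}(q))$.

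The normalizations $g_\nu$ and $h_\nu$ submit to the same template more easily: by \eqref{Bessel eqn dg2} and its $h_\nu$-analogue (using the Hadamard factorizations of $g'_\nu$ and $h'_\nu$ from \cite[Lemma 8]{MR3423439}), $1+zg''_\nu/g'_\nu$ and $1+zh''_\nu/h'_\nu$ are single Mittag-Leffler sums over the zeros $\alpha_{\nu,n}(q)$ and $\beta_{\nu,n}(q)$, so the triangle-inequality step requires no case split in $\nu$, and the corresponding monotone functions on $(0,\alpha_{\nu,1}(q))$ and $(0,\beta_{\nu,1}(q))$ yield \eqref{jan convex Bessel eqn gs} and \eqref{jan convex Bessel eqn hs} with $s=2$. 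I expect the only real obstacle to be the $\nu>1$ branch for $f_\nu$, which is absorbed by invoking \eqref{relation}; everything else is routine bookkeeping identical to the Janowski starlike argument.
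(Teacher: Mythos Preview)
Your proposal is correct and follows essentially the same approach as the paper: both arguments reduce to the Mittag-Leffler expansion \eqref{Bessel expr df2}, split the $f_\nu$ case at $\nu=1$ using \eqref{relation} for $\nu>1$, bound the Janowski quotient by its value on the positive real axis, and then use monotonicity plus the Intermediate Value Theorem on $u_\nu(r;q)=rf''_\nu(r;q)/f'_\nu(r;q)+(A-B)/(1+|B|)$ to obtain the unique root in $(0,\xi'_{\nu,1}(q))$, with $g_\nu$ and $h_\nu$ handled directly from the Hadamard factorizations of their derivatives. The only cosmetic difference is that the paper writes out the majorization of the full Janowski quotient in one display rather than first isolating $|zf''_\nu/f'_\nu|$, but the content is identical.
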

\begin{proof}
	The proof for the case $s=2$ is given as the other case is same except for the difference of zeros in the two cases. For $f_\nu(\cdot;q)$ to be Janowski convex in the disk $\{z: |z|<r\}$, the inequality $(zf''_\nu(z;q)/f'_\nu(z;q)) /(A-B(1+zf''_\nu(z;q)/f'_\nu(z;q)))|<1$ must hold for $|z|<r$. By basic computations, using \eqref{Bessel expr df2}, we notice that for $0\leq \nu <1$ the function $f_\nu(\cdot;q)$ satisfies the inequality
	\begin{align*}
	\left|\frac{\dfrac{zf''_\nu(z;q)}{f'_\nu(z;q)}}{A-B\left(1+\dfrac{zf''_\nu(z;q)}{f'_\nu(z;q)}\right)}\right|&\leq \frac{\sum\limits_{n\geq 1}\dfrac{2|z|^2}{\xi'^2_{\nu,n}(q)-|z|^2}+\left(\dfrac{1}{\nu}-1\right)\sum\limits_{n\geq 1}\dfrac{2|z|^2}{\xi^2_{\nu,n}(q)-|z|^2}}{A-B-|B|\left(\sum\limits_{n\geq 1}\dfrac{2|z|^2}{\xi'^2_{\nu,n}(q)-|z|^2}+\left(\dfrac{1}{\nu}-1\right)\sum\limits_{n\geq 1}\dfrac{2|z|^2}{\xi^2_{\nu,n}(q)-|z|^2}\right)}
	\end{align*} for $|z|<\xi'_{\nu,1}(q)$ with equality at $z=|z|=r$. By the means of the relation \eqref{relation}, the above inequality holds for $\nu\geq 1$ as well. From equation \eqref{Bessel expr df2}, replacing $z$ by $|z|$, the above inequality results into
	\begin{equation}\label{jan convex Bessel eqn mf2}
	\left|\frac{\dfrac{zf''_\nu(z;q)}{f'_\nu(z;q)}}{A-B\left(1+\dfrac{zf''_\nu(z;q)}{f'_\nu(z;q)}\right)}\right| \leq \frac{-\dfrac{|z|f''_\nu(|z|;q)}{f'_\nu(|z|;q)}}{A-B+|B|\dfrac{|z|f''_\nu(|z|;q)}{f'_\nu(|z|;q)}}
	\end{equation} for $\nu>0$. This implies that the radius of Janowski convexity for the function $f_\nu(z;q)$; $r^c_{A,B}(f_\nu)$ is the smallest positive root of the equation \begin{equation}\label{jan convex Bessel eqn f2} \frac{rf''_\nu(r;q)}{f'_\nu(r;q)}+\frac{A-B}{1+|B|}=0.
	\end{equation}
	
	On the other hand, for $\nu>0$, the function $u_\nu(\cdot;q)\colon(0,\xi'_{\nu,1}(q))\to\mathbb{R}$, defined by $u_\nu(r;q) :=rf''_\nu(r;q)/f'_\nu(r;q) +(A-B)/(1+|B|)$, is continuous on $(0,\xi'_{\nu,1}(q))$ and is strictly decreasing in $(0,\xi'_{\nu,1}(q))$ as \[ u'_\nu(r;q) <4r\sum_{n\geq1} \left(\frac{\xi^2_{\nu,n}(q)}{(\xi^2_{\nu,n}(q)-r^2)^2} -\frac{\xi'^2_{\nu,n}(q)}{(\xi'^2_{\nu,n}(q)-r^2)^2}\right) <0  \] since $\xi^2_{\nu,n}(q) (\xi'^2_{\nu,n}(q)-r^2)^2 <\xi'^2_{\nu,n}(q)(\xi^2_{\nu,n}(q)-r^2)^2$ for $r<\sqrt{\xi'_{\nu,1}(q)\xi_{\nu,1}(q)}$ and $\nu>0$. Also, $\lim_{r\searrow 0}u_\nu(r;q)=(A-B)/(1+|B|)>0$ and $\lim_{r\nearrow \xi'_{\nu,1}(q)} u_\nu(r;q)=-\infty<0$. Therefore, by monotonicity of the function $u_\nu(\cdot;q)$, it is clear that the function $f_\nu$ is Janowski convex for $|z|<r_1$ where $r_1$ is the unique positive root of equation \eqref{jan convex Bessel eqn f2}. Since the zeros of the equation \eqref{jan convex Bessel eqn f2} are same as those equation \eqref{jan convex Bessel eqn fs} with $s=2$, the radius of $f_\nu(\cdot;q)$, $r^c_{A,B}(f_\nu)$, is the unique positive root of equation \eqref{jan convex Bessel eqn fs} for $s=2$ in $(0,\xi'_{\nu,1}(q))$.
	
	For the function $g_\nu(\cdot;q)$ and $h_\nu(\cdot;q)$, keeping in view equation \eqref{Bessel eqn dg2} and proceeding according to the proof of Theorem \ref{jan star Bessel}, we get that the Janowski convex radii of $g_\nu(\cdot;q)$ and $h_\nu(\cdot;q)$, $r^c_{A,B}(g_\nu)$ and $r^c_{A,B}(h_\nu)$, are the unique positive roots of the equations \eqref{jan convex Bessel eqn gs} and \eqref{jan convex Bessel eqn hs} with $s=2$ in $(0,\alpha_{\nu,1}(q))$ and $(0,\beta_{\nu,1}(q))$, respectively.
\end{proof}

%>>>>>>>>>>>>>>>>>>>>>>>>>>>>>>>>>>>>>>>>>>>>>>>>>>>>>>>>>>>>>>>>>>>>>>>>>>>>>>>>>>>>>>>>>>>>>>>>>
Since the $q$-Bessel functions are analytic extensions of the Bessel function of first kind as stated before, therefore for the normalized Bessel functions $f_\nu$, $g_\nu$ and $h_\nu$ given by \eqref{Bessel norm f}, \eqref{Bessel norm g} and \eqref{Bessel norm h}, respectively, we have the following.
\begin{corollary}
	Let $\xi_{\nu,1}$ is the first positive zero of the function the Bessel function of first kind $J_\nu$. Suppose $\nu>-1$. Then the Janowski starlike radii of the function $f_\nu$ (with $\nu>0$), $g_\nu$ and $h_\nu$; $r^*_{A,B}(f_\nu)$, $r^*_{A,B}(g_\nu)$ and $r^*_{A,B}(h_\nu)$ are respectively the unique root of the equations (respectively) \[ \frac{rJ'_\nu(r)}{J_\nu(r)} -\nu +\nu \left(\frac{A-B}{1+|B|}\right) =0, \  \frac{rJ'_\nu(r)}{J_\nu(r)}-\nu+\frac{A-B}{1+|B|}=0   \text{ and } \frac{\sqrt{r}J'_\nu(\sqrt{r})}{J_\nu(\sqrt{r})}-\nu+2\frac{A-B}{1+|B|}=0. \]
\end{corollary}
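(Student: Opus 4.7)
The plan is to adapt the argument of Theorem \ref{jan star Bessel} to the classical setting by replacing the Hadamard factorization \eqref{jqv} of the $q$-Bessel function with the Weierstrass decomposition of $J_\nu$ recorded earlier in the excerpt. First I would extract the logarithmic-derivative identity
\[ \frac{zJ'_\nu(z)}{J_\nu(z)} = \nu - \sum_{n\geq 1}\frac{2z^2}{\xi_{\nu,n}^2 - z^2}, \]
which is the classical analogue of \eqref{Bessel expr jq}. Combining this with the normalizations \eqref{Bessel norm f}, \eqref{Bessel norm g}, \eqref{Bessel norm h} produces Mittag-Leffler expansions for $zf'_\nu(z)/f_\nu(z)$, $zg'_\nu(z)/g_\nu(z)$ and $zh'_\nu(z)/h_\nu(z)$ that mirror \eqref{Bessel expr f2}, \eqref{Bessel expr g2}, \eqref{Bessel expr h2} with $\xi_{\nu,n}(q)$ replaced by the classical positive zero $\xi_{\nu,n}$.

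Next, for $|z| < \xi_{\nu,1}$ I apply the triangle inequality to the Janowski quotient $|((zf'_\nu(z)/f_\nu(z))-1)/(A - B\,zf'_\nu(z)/f_\nu(z))|$ exactly as in the proof of Theorem \ref{jan star Bessel}; the hypotheses $\nu > 0$ (respectively $\nu>-1$) keep both the tail series and the denominator positive in the relevant range. Replacing $z$ by $|z|$ in the logarithmic-derivative formula reduces the bound to
\[ \frac{1 - |z|f'_\nu(|z|)/f_\nu(|z|)}{A - B + |B|\bigl(|z|f'_\nu(|z|)/f_\nu(|z|) - 1\bigr)}, \]
which is strictly less than $1$ precisely when $|z|f'_\nu(|z|)/f_\nu(|z|) > 1 - (A-B)/(1+|B|)$. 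Substituting back $zf'_\nu(z)/f_\nu(z) = (1/\nu)\,zJ'_\nu(z)/J_\nu(z)$ recovers the first equation in the statement; the same reduction applied to $g_\nu$ and $h_\nu$ delivers the other two, with the coefficients $\nu$, $1$, $2$ in front of $(A-B)/(1+|B|)$ arising as the multiplicative constants coming from the exponent $1/\nu$ in $f_\nu$, the shift $1-\nu$ in $g_\nu$, and the $\sqrt{z}$-composition in $h_\nu$ respectively.

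Finally, to secure uniqueness of the root in $(0,\xi_{\nu,1})$ I would define $u_\nu(r) := rf'_\nu(r)/f_\nu(r) - 1 + (A-B)/(1+|B|)$ and its obvious analogues for $g_\nu$, $h_\nu$. Termwise differentiation of the Mittag-Leffler series gives
\[ u'_\nu(r) = -\frac{1}{\nu}\sum_{n\geq 1}\frac{4r\,\xi_{\nu,n}^2}{(\xi_{\nu,n}^2 - r^2)^2} < 0, \]
so $u_\nu$ is strictly decreasing on $(0,\xi_{\nu,1})$, with $u_\nu(0^+) = (A-B)/(1+|B|) > 0$ and $u_\nu(r) \to -\infty$ as $r \nearrow \xi_{\nu,1}$. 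The Intermediate Value Theorem then pins down a unique zero, which is the claimed Janowski starlike radius. I expect no genuine obstacle here, since every analytic step is parallel to the $q$-analogue already proved in Theorem \ref{jan star Bessel}; the only routine care needed is bookkeeping of the multiplicative constants from the three different normalizations when one rewrites $zp'(z)/p(z) = 1 - (A-B)/(1+|B|)$ directly in terms of $zJ'_\nu(z)/J_\nu(z)$.
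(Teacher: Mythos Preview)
Your proposal is correct and the argument goes through without difficulty. Note, however, that the paper does not actually reprove anything for this corollary: it simply observes that the Jackson and Hahn--Exton $q$-Bessel functions are analytic $q$-extensions of $J_\nu$ (with $J_\nu^{(2)}((1-z)q;q)\to J_\nu(z)$ and $J_\nu^{(3)}((1-z)q;q)\to J_\nu(2z)$ as $q\nearrow 1$), and reads off the classical result from Theorem~\ref{jan star Bessel} as a limiting case. Your route---redoing the proof of Theorem~\ref{jan star Bessel} verbatim with the Weierstrass factorization of $J_\nu$ in place of \eqref{jqv}---is entirely parallel to the paper's proof of the $q$-analogue but applied directly to the classical function. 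It is more self-contained (no need to justify that the radii behave well under the $q\to 1$ limit) at the cost of repeating an argument the paper has already carried out; either approach is acceptable here.
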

\begin{corollary}
	For the Bessel function of first kind $J_\nu$, let $\xi'_{\nu,1}$, $\alpha_{\nu,1}$ and $\beta_{\nu,1}$ are the first positive zeros of $J'_\nu(z)$, $z J'_\nu(z)+(1-\nu)J_\nu(z)$ and $z J'_\nu(z)+(2-\nu)J_\nu(z)$, respectively. If $\nu>0$, the Janowski convex radius of the function $f_\nu$; $r^c_{A,B}(f_\nu)$, if $\nu>-1$, that of the functions $g_\nu$; $r^c_{A,B}(g_\nu)$ and $h_\nu$; $r^c_{A,B}(h_\nu)$ are the smallest roots of the equations (respectively)
	\begin{align*}
	&\frac{rJ''_\nu(r)}{J'_\nu(r)}+\left(\frac{1}{\nu}-1\right)\frac{rJ'_\nu(r)}{J_\nu(r)}+\frac{A-B}{1+|B|}=0,\\
	&\frac{\nu(\nu-1)J_\nu(r)+2(1-\nu)rJ'_\nu(r)+r^2J''_\nu(r)}{(1-\nu)J_\nu(r)+rJ'_\nu(r)}+\frac{A-B}{1+|B|}=0
	\intertext{and}
	&\frac{\nu(\nu-2)J_\nu(\sqrt{r})+(3-2\nu)\sqrt{r}J'_\nu(\sqrt{r})+rJ''_\nu(\sqrt{r})}{2((2-\nu)J_\nu(\sqrt{r})+\sqrt{r}J'_\nu(\sqrt{r}))}+\frac{A-B}{1+|B|}=0.
	\end{align*}
	Moreover, the radii $r^c_{A,B}(f_\nu)$, $r^c_{A,B}(g_\nu)$ and $r^c_{A,B}(h_\nu)$ are the  unique roots of the respective above equations in $(0,\xi'_{\nu,1})$, $(0,\alpha_{\nu,1})$ and $(0,\beta_{\nu,1})$.
\end{corollary}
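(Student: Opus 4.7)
The plan is to mimic the argument of Theorem \ref{jan convex Bessel} essentially verbatim, replacing the Jackson $q$-Bessel function $J^{(2)}_\nu(\cdot;q)$ and its derivative by the classical Bessel function $J_\nu$ and $J'_\nu$, and replacing the $q$-dependent zeros $\xi_{\nu,n}(q)$, $\xi'_{\nu,n}(q)$, $\alpha_{\nu,n}(q)$, $\beta_{\nu,n}(q)$ with their classical counterparts. The ingredients are all available in the paper: the Weierstrass decomposition of $J_\nu$ recorded just before the lemniscate-starlikeness corollary, the entirely analogous product expansion of $J'_\nu$ in terms of $\xi'_{\nu,n}$, and the classical interlacing $0<\xi'_{\nu,1}<\xi_{\nu,1}<\xi'_{\nu,2}<\xi_{\nu,2}<\cdots$. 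Equivalently, one could obtain the corollary by letting $q\nearrow 1$ in equations \eqref{jan convex Bessel eqn fs}--\eqref{jan convex Bessel eqn hs}, using that $\xi_{\nu,n}(q)$, $\alpha_{\nu,n}(q)$, $\beta_{\nu,n}(q)$ (and the primed variants) converge to the corresponding zeros of $J_\nu$, $J'_\nu$, $zJ'_\nu+(1-\nu)J_\nu$, and $zJ'_\nu+(2-\nu)J_\nu$.

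Carrying out the direct route, I would first take logarithmic derivatives of the Weierstrass products to write
\[
1+\frac{zf''_\nu(z)}{f'_\nu(z)}=1-\sum_{n\geq 1}\frac{2z^2}{(\xi'_{\nu,n})^2-z^2}-\left(\frac{1}{\nu}-1\right)\sum_{n\geq 1}\frac{2z^2}{\xi_{\nu,n}^2-z^2},
\]
and then apply the triangle inequality (together with the inequality of \cite[Lemma 2.1]{MR3641791} when $\nu\geq 1$, to absorb the negative coefficient $1/\nu-1$) to majorize the Janowski modulus by its value at $z=|z|$. This yields, for $|z|<\xi'_{\nu,1}$,
\[
\left|\frac{zf''_\nu(z)/f'_\nu(z)}{A-B(1+zf''_\nu(z)/f'_\nu(z))}\right|\leq\frac{-|z|f''_\nu(|z|)/f'_\nu(|z|)}{A-B+|B|\,|z|f''_\nu(|z|)/f'_\nu(|z|)},
\]
whose right-hand side is strictly less than $1$ precisely when $rf''_\nu(r)/f'_\nu(r)+(A-B)/(1+|B|)<0$. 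Translating this back via the identity above in terms of $J_\nu$ and $J'_\nu$ produces the first displayed equation. The treatment of $g_\nu$ and $h_\nu$ is identical once one uses the Hadamard factorizations $g'_\nu(z)=\prod_{n\geq 1}(1-z^2/\alpha_{\nu,n}^2)$ and $h'_\nu(z)=\prod_{n\geq 1}(1-z/\beta_{\nu,n}^2)$, exactly as in the proof of Theorem \ref{jan convex Bessel} with $s=2$.

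For uniqueness, I would then verify that the real-valued function
\[
u_\nu(r):=\frac{rf''_\nu(r)}{f'_\nu(r)}+\frac{A-B}{1+|B|}
\]
is continuous on $(0,\xi'_{\nu,1})$, satisfies $\lim_{r\searrow 0}u_\nu(r)=(A-B)/(1+|B|)>0$ and $\lim_{r\nearrow\xi'_{\nu,1}}u_\nu(r)=-\infty$, and is strictly decreasing because the interlacing $\xi'_{\nu,n}<\xi_{\nu,n}$ forces $u'_\nu(r)<0$ (this is the same monotonicity computation performed in the $q$-Bessel proof). The Intermediate Value Theorem then pins down a unique root in $(0,\xi'_{\nu,1})$, which is $r^c_{A,B}(f_\nu)$. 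Entirely parallel monotonicity arguments on $(0,\alpha_{\nu,1})$ and $(0,\beta_{\nu,1})$ handle $g_\nu$ and $h_\nu$.

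The only genuinely delicate point is the sign of $1/\nu-1$ in the logarithmic-derivative expansion for $f_\nu$: when $0<\nu<1$ the coefficient is positive and the triangle inequality applies directly, but when $\nu\geq 1$ it is negative, and one must invoke the auxiliary inequality \eqref{relation} from \cite[Lemma 2.1]{MR3641791} (the same device used in the proof of Theorem \ref{jan convex Bessel}) in order to still dominate the Janowski modulus by its value at $|z|$. Once this wrinkle is handled, the passage from the $q$-Bessel setting to the classical Bessel setting is bookkeeping, since every product representation and every zero-interlacing relation employed in Theorem \ref{jan convex Bessel} has an exact classical analogue already cited in the paper.
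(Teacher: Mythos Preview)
Your proposal is correct and follows exactly the approach the paper intends: the corollary is stated without proof, immediately after the remark that the $q$-Bessel functions are analytic $q$-extensions of $J_\nu$, so the paper's implicit argument is precisely the limiting/mimicking procedure you describe (with the same Hadamard factorizations, interlacing of zeros, and use of \eqref{relation} for $\nu>1$). One small slip: the right-hand side of your displayed Janowski bound is $<1$ when $rf''_\nu(r)/f'_\nu(r)+(A-B)/(1+|B|)>0$, not $<0$; this is consistent with your subsequent monotonicity argument and does not affect the conclusion.
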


%>>>>>>>>>>>>>>>>>>>>>>>>>>>>>>>>>>>>>>>>>>>>>>>>>>>>>>>>>>>>>>>>>>>>>>>>>>>>>>>>>>>>>>>>>>>>>>>>>>>
The next theorem presents the radius of Janowski starlikeness of the normalized Lommel functions of first kind given by \eqref{Lommel norm f}, \eqref{Lommel norm g} and \eqref{Lommel norm h}.
\begin{theorem}
	Let $\xi_{\mu,1}$ be the first positive zero of the Lommel function of first kind $s_{\mu-\frac{1}{2},\frac{1}{2}}$. Suppose $\mu\in(-1,1)\setminus\{0\}$. Then the Janowski starlike radii of the functions $f_\mu$, (with $-1/2<\mu<1,\mu\neq0$), $g_\mu$ and $h_\mu$, $r^*_{A,B}(f_\mu)$;  $r^*_{A,B}(g_\mu)$ and  $r^*_{A,B}(h_\mu)$, are the unique positive roots of the equations
	\begin{align}
	&\frac{rs'_{\mu-\frac{1}{2},\frac{1}{2}}(r)}{s_{\mu-\frac{1}{2},\frac{1}{2}}(r)}-\left(\mu+\frac{1}{2}\right)+\left(\mu+\frac{1}{2}\right)\frac{A-B}{1+|B|}=0,\label{jan star Lommel eqn fs}\\
	&\frac{rs'_{\mu-\frac{1}{2},\frac{1}{2}}(r)}{s_{\mu-\frac{1}{2},\frac{1}{2}}(r)}-\left(\mu+\frac{1}{2}\right)+\frac{A-B}{1+|B|}=0\label{jan star Lommel eqn gs}
	\intertext{and}
	&\frac{\sqrt{r}s'_{\mu-\frac{1}{2},\frac{1}{2}}(\sqrt{r})}{s_{\mu-\frac{1}{2},\frac{1}{2}}(\sqrt{r})}-\left(\mu+\frac{1}{2}\right)+2\frac{A-B}{1+|B|}=0\label{jan star Lommel eqn hs}
	\end{align}
	respectively in $(0,\xi_{\mu,1})$.
\end{theorem}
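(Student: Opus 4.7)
The plan is to mirror the proof of Theorem \ref{jan star Bessel}, feeding into it the Mittag-Leffler expansions for Lommel functions obtained in equations \eqref{Lommel expr f}, \eqref{Lommel expr g} and \eqref{Lommel expr h} in place of the corresponding expansions for $q$-Bessel functions. Throughout, $\xi_{\mu,n}$ denotes the $n^{\text{th}}$ positive zero of $s_{\mu-1/2,1/2}$, and work is carried out on $|z|<\xi_{\mu,1}$.

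For the normalization $f_\mu$ with $\mu\in(-1/2,1)\setminus\{0\}$, I start from
\[\frac{zf'_\mu(z)}{f_\mu(z)}=1-\frac{1}{\mu+\tfrac{1}{2}}\sum_{n\geq 1}\frac{2z^2}{\xi_{\mu,n}^2-z^2},\]
whose coefficient $1/(\mu+1/2)$ is positive exactly on the stated parameter range. Applying the triangle inequality termwise to $|(zf'_\mu/f_\mu-1)/(A-Bzf'_\mu/f_\mu)|$, each summand is majorised by its value at $z=|z|$, and equality is attained on the positive real axis. Using the real-variable identity $rf'_\mu(r)/f_\mu(r)=1-(\mu+1/2)^{-1}\sum 2r^2/(\xi_{\mu,n}^2-r^2)$ to rewrite the bound, one obtains
\[\left|\frac{zf'_\mu(z)/f_\mu(z)-1}{A-Bzf'_\mu(z)/f_\mu(z)}\right|\leq\frac{1-|z|f'_\mu(|z|)/f_\mu(|z|)}{(A-B)+|B|\bigl(|z|f'_\mu(|z|)/f_\mu(|z|)-1\bigr)}.\]
Setting the right-hand side equal to $1$ gives $rf'_\mu(r)/f_\mu(r)=1-(A-B)/(1+|B|)$, which after substituting $rf'_\mu/f_\mu=(\mu+1/2)^{-1}\,rs'_{\mu-1/2,1/2}/s_{\mu-1/2,1/2}$ is exactly \eqref{jan star Lommel eqn fs}.

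The same template, applied to the simpler expansions \eqref{Lommel expr g} and \eqref{Lommel expr h} (in which no factor $1/(\mu+1/2)$ appears), yields equations equivalent to \eqref{jan star Lommel eqn gs} and \eqref{jan star Lommel eqn hs}; here the sign issue disappears and the argument works across the whole range $\mu\in(-1,1)\setminus\{0\}$. To establish uniqueness in $(0,\xi_{\mu,1})$, I introduce $u_\mu\colon(0,\xi_{\mu,1})\to\mathbb{R}$ equal to the left-hand side of the corresponding equation. Termwise differentiation of the Mittag-Leffler expansion shows $u'_\mu(r)<0$ on $(0,\xi_{\mu,1})$ (each summand $2r^2/(\xi_{\mu,n}^2-r^2)$ is strictly increasing in $r$, and carries an overall negative coefficient in $u_\mu$). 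Since $\lim_{r\searrow 0}u_\mu(r)=(A-B)/(1+|B|)>0$ and $\lim_{r\nearrow\xi_{\mu,1}}u_\mu(r)=-\infty$, the Intermediate Value Theorem and strict monotonicity produce a unique zero.

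The subtlety I expect to require the most care is the case $\mu\in(-1/2,0)$ for $f_\mu$. Here $f_\mu=(\mu(\mu+1)s_{\mu-1/2,1/2})^{1/(\mu+1/2)}$ involves a fractional power of a quantity whose prefactor $\mu(\mu+1)$ is negative, so one must justify that the Mittag-Leffler expansion \eqref{Lommel expr f} is still meaningful and holds as written — equivalently, one must repeat the reduction used in the proof of Theorem \ref{lem starlike Lommel}, passing to $\varphi_1$ in place of $\varphi_0$ and shifting $\mu\mapsto\mu-1$ before translating back. Once that analytic step is secured, the estimates above go through verbatim. The remaining pieces (triangle inequality, monotonicity, limits at the endpoints) are direct and parallel to Theorem \ref{jan star Bessel}; no new ideas are needed beyond the correct choice of Hadamard factorisation.
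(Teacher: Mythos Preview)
Your proposal is correct and follows essentially the same approach as the paper: mimic Theorem~\ref{jan star Bessel} using the Mittag--Leffler expansions \eqref{Lommel expr f}--\eqref{Lommel expr h}, with the $\mu\in(-1/2,0)$ case for $f_\mu$ handled via the $\varphi_1$/$\mu\mapsto\mu-1$ shift exactly as in Theorem~\ref{lem starlike Lommel}, and uniqueness via monotonicity plus the Intermediate Value Theorem. The paper's proof is organized identically, splitting $f_\mu$ into the two parameter subranges and treating $g_\mu$, $h_\mu$ directly for $\mu\in(-1,1)\setminus\{0\}$.
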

\begin{proof}
	The proof for the function $f_\mu$ is divided into two cases, one when $\mu\in(0,1)$ and the other when $\mu\in(-1/2,0)$.
	
	Suppose that $\mu\in(0,1)$. Then with the factorization as in equation \eqref{Lommel expr f}, if we mimic the proof of Theorem \ref{jan star Bessel}, the Janowski starlike radius of $f_\mu$ is obtained to be the smallest positive root of equation
	\begin{equation}\label{jan star Lommel eqn f}\frac{rf'_\mu(r)}{f_\mu(r)}-1+\frac{A-B}{1+|B|} =0. \end{equation}
	
	For the case when $\mu\in(-1/2,0)$, we repeat the above proof assuming $1/2<\mu<1$ and substituting $\mu$ with $\mu-1$ to get the inequality \[ \left|\frac{\dfrac{zf'_{\mu-1}(z)}{f_{\mu-1}(z)}-1}{A-B\dfrac{zf'_{\mu-1}(z)}{f_{\mu-1}(z)}}\right| \leq \frac{1-\dfrac{|z|f'_{\mu-1}(|z|)}{f_{\mu-1}(|z|)}}{A-B+|B|\left(\dfrac{|z|f'_{\mu-1}(|z|)}{f_{\mu-1}(|z|)}-1\right)}  \] for $|z|<\xi_{\mu,1}$. By substituting $\mu+1$ in place of $\mu$ in the above inequality, it is observed that the  function $f_\mu$ is Janowski starlike in $\{z:|z|<r^*\}$ where $r^*$ is unique root of equation \eqref{jan star Lommel eqn f} for $\mu\in (-1/2,0)$. On combining both the cases, it follows that for $\mu\in(-1/2,1)$, $f_\mu$ is Janowski starlike for $|z|<r^*_{A,B}(f_\mu)$, where $r^*_{A,B}(f_\mu)$ is the unique positive root of equation \eqref{jan star Lommel eqn f} in $(0,\xi_{\mu,1})$. An equivalent form of the above equation is given by equation \eqref{jan star Lommel eqn fs}.
	
	By virtue of equation \eqref{Lommel expr g}, for $\mu\in(-1,1)\setminus\{0\}$ with the proof similar to that of Theorem \ref{jan star Bessel}, clearly the radii of Janowski starlikeness of the functions $g_\mu$ and $h_\mu$ are the unique positive roots of the equations \eqref{jan star Lommel eqn gs} and \eqref{jan star Lommel eqn hs}, respectively in $(0,\xi_{\mu,1})$. Thus, the theorem holds.
\end{proof}

%>>>>>>>>>>>>>>>>>>>>>>>>>>>>>>>>>>>>>>>>>>>>>>>>>>>>>>>>>>>>>>>>>>>>>>>>>>>>>>>>>>>>>>>>>>>>>>>>>>
Now for the Janowski convexity of the normalizations of Lommel function of first kind, following the same steps as done in proof of the Theorem \ref{jan convex Bessel} with the similar notations as mentioned before, we get the following:
\begin{theorem}
	Suppose $\mu\in(-1,1)\setminus\{0\}$. For Lommel function of first kind $s_{\mu-\frac{1}{2},\frac{1}{2}}$, let $\xi'_{\mu,1}$, $\xi_{\mu,1}$, $\gamma_{\mu,1}$ and $\delta_{\mu,1}$ denote the first positive zeros of $s'_{\mu-\frac{1}{2},\frac{1}{2}}$, $s_{\mu-\frac{1}{2},\frac{1}{2}}$, $g'_\mu$ and $h'_\mu$, respectively. If $\mu\in(-1/2,1)\setminus\{0\}$, then the function $f_\mu$ and if $\mu\in(-1,1)\setminus\{0\}$, the functions $g_\mu$ and $h_\mu$, their radii of Janowski convexity $r^c_{A,B}(f_\mu)$, $r^c_{A,B}(g_\mu)$ and $r^c_{A,B}(h_\mu)$, respectively are the unique positive roots of the equations (respectively)	\[ \frac{rs''_{\mu-\frac{1}{2},\frac{1}{2}}(r)}{s'_{\mu-\frac{1}{2},\frac{1}{2}}(r)} -\left(\frac{\mu-\frac{1}{2}}{\mu+\frac{1}{2}}\right) \frac{rs'_{\mu-\frac{1}{2},\frac{1}{2}}(r)}{s_{\mu-\frac{1}{2},\frac{1}{2}}(r)} +\frac{A-B}{1+|B|}=0, \] \[ \frac{(3/2-\mu)rs'_{\mu-\frac{1}{2},\frac{1}{2}}(r)+r^2s''_{\mu-\frac{1}{2},\frac{1}{2}}(r)}{(1/2-\mu)s_{\mu-\frac{1}{2},\frac{1}{2}}(r)+rs'_{\mu-\frac{1}{2},\frac{1}{2}}(r)} -\left(\mu+\frac{1}{2}\right) +\frac{A-B}{1+|B|}=0  \] and \[ \frac{(5/2-\mu)\sqrt{r}s'_{\mu-\frac{1}{2},\frac{1}{2}}(\sqrt{r})+rs''_{\mu-\frac{1}{2},\frac{1}{2}}(\sqrt{r})}{(3/2-\mu)s_{\mu-\frac{1}{2},\frac{1}{2}}(\sqrt{r})+\sqrt{r}s'_{\mu-\frac{1}{2},\frac{1}{2}}(\sqrt{r})}-\left(\mu+\frac{1}{2}\right) +2\frac{A-B}{1+|B|}=0 \] in $(0,\xi'_{\mu,1})$, $(0,\gamma_{\mu,1})$ and $(0,\delta_{\mu,1})$, respectively. Moreover, the relations  $r^c_{A,B}(f_\mu) <\xi'_{\mu,1} <\xi_{\mu,1}$, $r^c_{A,B}(g_\mu) <\gamma_{\mu,1} <\xi_{\mu,1}$ and $r^c_{A,B}(h_\mu) <\delta_{\mu,1} <\xi_{\mu,1}$ hold.
\end{theorem}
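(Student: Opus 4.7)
The plan is to mirror the proof of Theorem \ref{jan convex Bessel} for the $q$-Bessel functions, but with the Hadamard factorization of the Lommel function supplied by Lemma \ref{lemma Lommel} (and the corresponding factorizations of $g_\mu'$ and $h_\mu'$ already introduced in the proof of Theorem \ref{lem convex Lommel}). Writing $s := s_{\mu-\frac12,\frac12}$, the starting point is the identity
\begin{equation*}
1+\frac{zf_\mu''(z)}{f_\mu'(z)} = 1+\frac{zs''(z)}{s'(z)}+\left(\frac{1}{\mu+\frac12}-1\right)\frac{zs'(z)}{s(z)},
\end{equation*}
together with the analogous identities for $g_\mu$ and $h_\mu$ obtained by differentiating the normalizations \eqref{Lommel norm g} and \eqref{Lommel norm h}. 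Inserting the product expansions $s(z)=\tfrac{z^{\mu+1/2}}{\mu(\mu+1)}\prod(1-z^2/\xi_{\mu,n}^2)$, $g_\mu'(z)=\prod(1-z^2/\gamma_{\mu,n}^2)$, $h_\mu'(z)=\prod(1-z/\delta_{\mu,n}^2)$ and taking logarithmic derivatives will give Mittag--Leffler expansions entirely analogous to \eqref{Bessel expr df2} and \eqref{Bessel eqn dg2}.

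With these expansions in hand, I would bound the Janowski convexity quantity by the triangle inequality exactly as in \eqref{jan convex Bessel eqn mf2}. For $f_\mu$ this splits naturally into the subcases $\mu\in(0,\tfrac12]$ (where $1/(\mu+\tfrac12)-1\geq 0$, so the two Mittag--Leffler sums add with the same sign) and $\mu\in[\tfrac12,1)$ (where the coefficient is negative but of absolute value $<1$, so the auxiliary inequality \eqref{relation} from \cite[Lemma 2.1]{MR3641791} applies). For the range $\mu\in(-\tfrac12,0)$ I would repeat the argument with the shift $\mu\mapsto\mu-1$ and $\varphi_0\mapsto\varphi_1$ in Lemma \ref{lemma Lommel}, exactly as was done at the end of the proof of Theorem \ref{lem convex Lommel}; after replacing $\mu$ back by $\mu+1$, both subranges combine to cover $(-\tfrac12,1)\setminus\{0\}$. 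For $g_\mu$ and $h_\mu$ no such case-split is required: the factorization of $g_\mu'$ and $h_\mu'$ already exhibits $1+zg_\mu''/g_\mu'$ (resp.~$1+zh_\mu''/h_\mu'$) as $1$ minus a single Mittag--Leffler sum with all positive coefficients, and the triangle inequality goes through on the whole range $\mu\in(-1,1)\setminus\{0\}$.

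The upshot in each case is that the Janowski convex radius is the smallest positive root of the equation obtained by replacing the left-hand side of the convexity quotient by its bound and setting it equal to $1$; after clearing denominators these are precisely the three displayed equations. To upgrade ``smallest positive root'' to ``unique root in the relevant interval'' I would define $u(r)$ to be the left-hand side of each equation, verify continuity on $(0,\xi'_{\mu,1})$, $(0,\gamma_{\mu,1})$, $(0,\delta_{\mu,1})$ respectively, compute $u(0^+)=(A-B)/(1+|B|)>0$, check $u(t^-)=-\infty$ at the right endpoint (since each Mittag--Leffler sum blows up there), and establish strict monotonic decrease by differentiating termwise; the IVT then delivers uniqueness. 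The final chain $r^c_{A,B}(f_\mu)<\xi'_{\mu,1}<\xi_{\mu,1}$ and its analogues follow from the interlacing of zeros of $s$ and $s'$ (and of $s$ with $g_\mu',h_\mu'$) already used in Theorem \ref{lem convex Lommel}.

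The main obstacle is the $f_\mu$ case: the coefficient $1/(\mu+\tfrac12)-1$ changes sign on the parameter range and is unbounded as $\mu\searrow -\tfrac12$, so the triangle inequality is not immediate and one must invoke \cite[Lemma 2.1]{MR3641791} in the subrange where the coefficient is negative, and then perform the index-shift trick with $\varphi_1$ to handle $\mu\in(-\tfrac12,0)$. Verifying that the two subregion bounds patch together to give a single equation valid on all of $(-\tfrac12,1)\setminus\{0\}$, and that monotonicity of $u(r)$ persists through the shift, is the delicate part of the argument; the rest is bookkeeping in the style of the preceding theorems.
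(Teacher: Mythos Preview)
Your proposal is correct and follows exactly the approach the paper takes: the paper's own ``proof'' of this theorem consists of a single sentence stating that one follows the same steps as in the proof of Theorem~\ref{jan convex Bessel} with the notations already introduced (i.e., the Lommel Hadamard factorizations and the $\varphi_0\mapsto\varphi_1$ shift from Theorem~\ref{lem convex Lommel}). Your outline, including the case-split for $f_\mu$ into $\mu\in(0,1/2]$, $\mu\in[1/2,1)$ (via inequality~\eqref{relation}), and $\mu\in(-1/2,0)$ (via the index-shift trick), together with the direct treatment of $g_\mu$ and $h_\mu$ and the IVT/monotonicity argument for uniqueness, is precisely what the paper leaves implicit.
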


For the normalized Legendre polynomial of odd degree $\mathcal{P}_{2n-1}$, by means of equation \eqref{legendre eqn p} and the proof of Theorem \ref{jan star Bessel}, we have that the inequality \[ \left|\frac{\dfrac{z\mathcal{P}'_{2n-1}(z)}{\mathcal{P}_{2n-1}(z)}-1}{A-B\dfrac{z\mathcal{P}'_{2n-1}(z)}{\mathcal{P}_{2n-1}(z)}}\right| \leq \frac{1-\dfrac{|z|\mathcal{P}'_{2n-1}(|z|)}{\mathcal{P}_{2n-1}(|z|)}}{A-B-|B|\left(1-\dfrac{|z|\mathcal{P}'_{2n-1}(|z|)}{\mathcal{P}_{2n-1}(|z|)}\right)} \] holds for $|z|<\alpha_1$, where $\alpha_1$ is the first positive zero of the polynomial $\mathcal{P}_{2n-1}$.

Therefore, the following theorem gives the radius of Janowski starlikeness of the function $\mathcal{P}_{2n-1}$.
\begin{theorem}
	Let $\alpha_1$ denote the first positive zero of the normalized Legendre polynomial of odd degree $\mathcal{P}_{2n-1}$. The Janowski starlike radius for the function $\mathcal{P}_{2n-1}(z)$; $r^*_{A,B}(\mathcal{P}_{2n-1})$, is the unique positive root of the equation \[ \frac{r\mathcal{P}'_{2n-1}(r)}{\mathcal{P}_{2n-1}(r)}-1+\frac{A-B}{1+|B|}=0\] in $(0,\alpha_1)$.
\end{theorem}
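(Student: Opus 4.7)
The proof follows the template established in Theorem \ref{jan star Bessel}: begin from the pointwise bound on the Janowski test quantity that is already displayed in the paragraph preceding the theorem, identify the real-axis equation where that bound crosses $1$, and then upgrade ``smallest positive root'' to ``unique root in $(0,\alpha_1)$'' via monotonicity and the Intermediate Value Theorem.

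Requiring the right-hand side of the preceding inequality to be strictly less than $1$ and clearing the (positive) denominator yields the equivalent condition
\begin{equation*}
\bigl(1+|B|\bigr)\left(1-\frac{|z|\mathcal{P}'_{2n-1}(|z|)}{\mathcal{P}_{2n-1}(|z|)}\right) < A-B,
\end{equation*}
which, upon rearrangement, says precisely that the left-hand side of the equation in the theorem, evaluated at $r=|z|$, is positive. Hence $\mathcal{P}_{2n-1}$ is Janowski starlike on $\{z:|z|<r^*\}$, where $r^*$ denotes the smallest positive root of that equation. To secure existence and uniqueness inside $(0,\alpha_1)$, I would study the auxiliary function $u_n\colon(0,\alpha_1)\to\mathbb{R}$ defined by $u_n(r):=r\mathcal{P}'_{2n-1}(r)/\mathcal{P}_{2n-1}(r)-1+(A-B)/(1+|B|)$. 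Invoking equation \eqref{legendre eqn p} rewrites this as
\begin{equation*}
u_n(r)=\frac{A-B}{1+|B|}-\sum_{k=1}^{n-1}\frac{2r^2}{\alpha_k^2-r^2},
\end{equation*}
which is continuous on $(0,\alpha_1)$, has derivative $u_n'(r)=-\sum_{k=1}^{n-1}4r\alpha_k^2/(\alpha_k^2-r^2)^2<0$, and satisfies $\lim_{r\searrow 0}u_n(r)=(A-B)/(1+|B|)>0$ together with $\lim_{r\nearrow\alpha_1}u_n(r)=-\infty$. The Intermediate Value Theorem then produces a unique zero in $(0,\alpha_1)$, which is necessarily $r^*_{A,B}(\mathcal{P}_{2n-1})$.

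No genuine obstacle is anticipated: the argument is a direct polynomial-degree analogue of the Bessel and Lommel proofs, and is in fact somewhat simpler because $\mathcal{P}_{2n-1}$ has only finitely many zeros and the associated Hadamard-type product is finite. The only minor care point is the sign bookkeeping around $|B|$, but the hypothesis $-1\leq B<A\leq 1$ keeps $(A-B)/(1+|B|)$ strictly positive, which in turn guarantees the required sign change of $u_n$ on $(0,\alpha_1)$ and hence the correctness of the claimed characterization.
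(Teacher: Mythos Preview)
Your proposal is correct and follows essentially the same route as the paper: the paper simply records the displayed inequality preceding the theorem (obtained by mimicking Theorem~\ref{jan star Bessel} via equation~\eqref{legendre eqn p}) and then defers to that earlier argument for the monotonicity/Intermediate Value Theorem step. Your write-up makes those details explicit but adds nothing new methodologically.
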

With the calculations as performed in proof of Theorem \ref{lem convex legendre p}, for the function $\mathcal{P}_{2n-1}$, if $|z|<\alpha_1$, then \[ \left|\frac{\dfrac{z\mathcal{P}''_{2n-1}(z)}{\mathcal{P}'_{2n-1}(z)}}{A-B\left(1+\dfrac{z\mathcal{P}''_{2n-1}(z)}{\mathcal{P}'_{2n-1}(z)}\right)}\right|\leq\frac{-\dfrac{|z|\mathcal{P}''_{2n-1}(|z|)}{\mathcal{P}'_{2n-1}(|z|)}}{A-B+|B|\dfrac{|z|\mathcal{P}''_{2n-1}(|z|)}{\mathcal{P}'_{2n-1}(|z|)}}.   \] Hence, the Janowski convex radius of the function $\mathcal{P}_{2n-1}$ is given by the following result.
\begin{theorem}
	Let $\alpha_1$ be the first positive zero of the normalized Legendre polynomial of odd degree $\mathcal{P}_{2n-1}$. The radius of Janowski convexity for the function $\mathcal{P}_{2n-1}(z)$; $r^c_{A,B}(\mathcal{P}_{2n-1})$, is the smallest positive root of the equation \[ \frac{r\mathcal{P}''_{2n-1}(r)}{\mathcal{P}'_{2n-1}(r)}+\frac{A-B}{1+|B|}=0 \] in $(0,\alpha_1)$.
\end{theorem}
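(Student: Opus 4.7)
The plan is to exploit the estimate displayed in the paragraph immediately preceding the theorem, namely
\[
\left|\frac{z\mathcal{P}''_{2n-1}(z)/\mathcal{P}'_{2n-1}(z)}{A-B(1+z\mathcal{P}''_{2n-1}(z)/\mathcal{P}'_{2n-1}(z))}\right|\leq\frac{-|z|\mathcal{P}''_{2n-1}(|z|)/\mathcal{P}'_{2n-1}(|z|)}{A-B+|B||z|\mathcal{P}''_{2n-1}(|z|)/\mathcal{P}'_{2n-1}(|z|)},
\]
valid for $|z|<\alpha_1$. Setting $\rho(r):=r\mathcal{P}''_{2n-1}(r)/\mathcal{P}'_{2n-1}(r)$, an elementary algebraic rearrangement shows that the right-hand side is strictly less than $1$ precisely when
\[
u_n(r):=\rho(r)+\frac{A-B}{1+|B|}>0.
\]
Letting $r_1$ be the smallest positive zero of $u_n$, it follows that $\mathcal{P}_{2n-1}$ is Janowski convex on $|z|<r_1$. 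Since equality in the bound is attained when $z=|z|=r$, the bound $r_1$ is sharp, so $r_1=r^c_{A,B}(\mathcal{P}_{2n-1})$.

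The existence and uniqueness of this zero then form the remainder of the argument. I would restrict attention to $(0,\alpha_1')$, where $\alpha_1'<\alpha_1$ denotes the first positive zero of $\mathcal{P}'_{2n-1}$ (guaranteed by Rolle's theorem applied to $\mathcal{P}_{2n-1}$ between $0$ and $\alpha_1$); on this interval $\mathcal{P}'_{2n-1}>0$ and $\rho$ is continuous. Because the normalization makes $\mathcal{P}_{2n-1}$ an odd polynomial with $\mathcal{P}'_{2n-1}(0)=1$ and $\mathcal{P}''_{2n-1}(0)=0$, one has $\lim_{r\searrow 0}u_n(r)=(A-B)/(1+|B|)>0$, while $\mathcal{P}'_{2n-1}(r)\searrow 0^{+}$ together with $\mathcal{P}''_{2n-1}(\alpha_1')<0$ at the simple zero gives $\lim_{r\nearrow\alpha_1'}u_n(r)=-\infty$. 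The Intermediate Value Theorem then yields a root.

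For uniqueness I would establish that $u_n$ is strictly decreasing on $(0,\alpha_1')$ via the representation \eqref{legendre expr p},
\[
1+\rho(r)=1-\sum_{k=1}^{n-1}\frac{2r^2}{\alpha_k^2-r^2}-\frac{\sum_{k=1}^{n-1}4r^2\alpha_k^2/(\alpha_k^2-r^2)^2}{1-\sum_{k=1}^{n-1}2r^2/(\alpha_k^2-r^2)}.
\]
On $(0,\alpha_1')$, the quantity $S(r):=\sum_k 2r^2/(\alpha_k^2-r^2)$ is strictly increasing from $0$ to $1$, so $1-S$ is positive and decreasing, while $T(r):=\sum_k 4r^2\alpha_k^2/(\alpha_k^2-r^2)^2$ is positive and strictly increasing; therefore $T/(1-S)$ is strictly increasing, and $1+\rho=1-S-T/(1-S)$ is strictly decreasing, as required. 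The only genuine obstacle is this monotonicity verification; the rest of the argument is standard IVT bookkeeping modelled on the proofs of Theorems \ref{lem convex legendre p} and \ref{jan convex Bessel}.
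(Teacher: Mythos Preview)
Your proposal is correct and follows essentially the same route as the paper, which for this theorem gives no proof beyond the displayed inequality and an appeal to the earlier arguments (Theorems~\ref{lem convex legendre p} and~\ref{jan convex Bessel}). Your treatment is in fact more careful than the paper's analogous proofs: you correctly restrict to $(0,\alpha_1')$ where $\rho$ is continuous, you note sharpness via equality at $z=|z|=r$, and you supply a monotonicity argument for uniqueness that the paper omits (the theorem only claims the ``smallest positive root'').
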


%>>>>>>>>>>>>>>>>>>>>>>>>>>>>>>>>>>>>>>>>>>>>>>>>>>>>>>>>>>>>>>>>>>>>>>>>>>>>>>>>>>>>>>>>>>>>>>>>>

\end{document}